\newcommand{\midarrow}{\tikz \draw[-triangle 90 reversed] (0,0) -- +(.1,0);}
\newcommand{\midarrowdown}{\tikz \draw[-triangle 90] (0,0) -- +(.1,0);}
\theoremstyle{plain}
\newtheorem{theorem}{Theorem}[section]
\newtheorem{remark}[theorem]{Remark}
\newtheorem{proposition}[theorem]{Proposition}
\newtheorem{lemma}[theorem]{Lemma}
\newtheorem{corollary}[theorem]{Corollary}
\theoremstyle{definition}
\newtheorem{definition}[theorem]{Definition}
\newtheorem{conjecture}[theorem]{Conjecture}
\newtheorem{example}[theorem]{Example}
\newcommand{\M}{\mathcal{M}}
\newcommand{\R}{\mathbb{R}}
\newcommand{\Z}{\mathbb{Z}}
\newcommand{\N}{\mathbb{N}}
\def\eps{\varepsilon}
\def\phi{\varphi}
\def\R{{\mathbb R}}
\def\N{{\mathbb N}}
\def\Z{{\mathbb Z}}
\def\F{{\mathcal F}}
\def\M{{\mathcal M}}
\def\es{{\emptyset}}
\def\sm{\setminus}
\def\diam{\mbox{\rm diam } }
\def\bd{\partial }
\def\le{\leqslant}
\def\ge{\geqslant}
\def\F{\mathcal{F}}
\def\M{\mathcal{M}}
\begin{document}

\title{Differentiability of the pressure in non-compact spaces}
\date{\today}

\subjclass[2010]{37D35, 28D20, 46T20}
\keywords{Topological pressure, countable Markov shifts, compactifications}

\begin{thanks}
{We would like to thank Natalia Jurga, Paulo Varandas and An\'ibal Velozo for many interesting comments and suggestions. Also, we thank the referee for a careful reading and many comments and suggestions that improved the article.  G.I.\ was partially supported  by Proyecto Fondecyt 1190194 and by CONICYT PIA ACT172001}
\end{thanks}

\author[G.~Iommi]{Godofredo Iommi}
\address{Facultad de Matem\'aticas,
Pontificia Universidad Cat\'olica de Chile (UC), Avenida Vicu\~na Mackenna 4860, Santiago, Chile}
\email{\href{giommi@mat.uc.cl}{giommi@mat.uc.cl}}
\urladdr{\url{http://http://www.mat.uc.cl/~giommi/}}

\author[M.~Todd]{Mike Todd}
\address{Mathematical Institute\\
University of St Andrews\\
North Haugh\\
St Andrews\\
KY16 9SS\\
Scotland} \email{\href{mailto:m.todd@st-andrews.ac.uk}{m.todd@st-andrews.ac.uk}}
\urladdr{\url{http://www.mcs.st-and.ac.uk/~miket/}}

\begin{abstract}
Regularity properties of the pressure are related to phase transitions. In this article we study thermodynamic formalism for systems defined in non-compact phase spaces, our main focus being countable Markov shifts. We produce metric compactifications of the space which allow us to prove that the pressure is differentiable on a residual set and outside an Aronszajn null set in the space of uniformly continuous functions. We establish a criterion, the so called \emph{sectorially arranged} property, which implies that the pressure in the original system and in the compactification coincide. Examples showing that the compactifications can have rich boundaries, for example a Cantor set, are provided.
\end{abstract}

\maketitle

\section{Introduction}

Beginning with the work of Gibbs, the formalism of equilibrium statistical mechanics was developed to address questions and problems related to systems consisting of a large number of particles. During the early 1970s,  Ruelle and Sinai among others \cite{Dobr, ru, Sinai}, realised that the underlying mathematical structure of this formalism could be successfully applied in the dynamical systems setting. The monograph of Bowen \cite{bow} is a remarkable example of how well the formalism is fitted to solving difficult questions in uniformly hyperbolic dynamics. The main object of the theory is the \emph{pressure}. This is a functional,  related to a dynamical system $T:X \to X$, defined on some subsets of the space of  real  continuous functions. One of the main problems in equilibrium statistical mechanics is that of understanding phase transitions. In the mathematical context this is related to regularity properties of the pressure. If $T$ is a continuous map and $X$ a compact space, several authors  have studied this problem \cite{ip,ru,Wal92}. For example, Walters \cite{Wal92} proved that for systems with upper semi-continuous entropy map the lack of differentiability of the pressure is related to the non-uniqueness of equilibrium measures.  As Walters showed, Gateaux differentiability of the pressure is related to the concept of a \emph{tangent functional}, which in good settings coincide with equilibrium measures.  In Section~\ref{ssec:tangnoteq} we give an example of a tangent functional which is not an equilibrium measure.

 As observed in \cite[Section 3]{ip}, it follows from a result of Mazur that the pressure is Gateaux differentiable in a residual set of the space of continuous functions, see Proposition \ref{thm_topo}. For uniformly hyperbolic systems, Ruelle \cite{ru} proved that when restricted to H\"older functions the pressure is differentiable at every point.  We show (see Proposition \ref{thm:bl}) that in the class of continuous functions, the set at which the pressure is not differentiable is also Aronszajn null.  In Section~\ref{ssec:SFT} we consider the specific case of sub-shifts of finite type. This is a fundamental example since  uniformly hyperbolic systems can be coded with them and thus the thermodynamic formalism for sub-shifts of finite type  can be transferred  to uniformly hyperbolic systems, see \cite{bow}.  It also gives us a natural setting to demonstrate when all the standard theory goes through.

If the space $X$ is no longer assumed to be compact the situation is more complicated and the theory only mildly developed. In
Section \ref{sec:tnc} we describe several approaches to define the pressure and outline the difficulties in each case. Our aim is to describe the regularity properties of the pressure for a large class of continuous functions in this non-compact setting. We will concentrate on a particular type of system namely, countable Markov shifts (CMS). These can be thought of as non-compact generalisations of sub-shifts of finite type, which are defined by means of a countable directed graph. It turns out that these systems are symbolic models for a wide range of dynamical systems. Indeed, after the work  of Sarig \cite{s3},  countable Markov partitions have been constructed for a large class of dynamical systems. This allows for the construction of a semi-conjugacy between a relevant part of the dynamics with a CMS. This has been achieved in the following contexts: positive entropy $C^{\infty}$ diffeomorphisms in manifolds,  Sinai and Bunimovich billiards and interval maps with critical points and discontinuities,  to name a few (see the survey \cite{l} for more details).

Mauldin and Urba\'nski \cite{mu} and Sarig \cite{Sar99} defined pressure for certain classes of regular continuous functions (e.g. summable variations) $\Sigma \mapsto \R$ in the context of CMS $\sigma: \Sigma \to \Sigma$. We consider a variational definition of the pressure that holds for any continuous function. Our strategy to prove regularity results for the pressure, similar to those that hold in the compact case, see Section \ref{sec:tc}, is to construct a metric $d$  which gives compact completion $\bar\Sigma$ (which we then also refer to as the compactification), and to consider uniformly continuous functions $\phi\in UC_d(\Sigma)$. Then to prove that the pressure on the original system coincides with that of the compactification. In this way we can transfer the results from the compact setting to the non-compact one. Note that such functions must be bounded, so it only really makes sense to consider cases in which $h_{top}(\sigma)<\infty$.  Since the space of bounded uniformly continuous functions, as well as the boundary of  $\Sigma$, depend upon the metric, finding metrics for which we can apply this strategy and for which the set of uniformly continuous functions is as large as possible, is of interest. For a Markov graph with vertices $V$ we start with a metric $\rho$ on $V$ which then induces a metric on $\Sigma$, see \eqref{eq:metric}.  If $\rho$ is totally bounded then the induced metric on $\Sigma$ has compact completion $\bar \Sigma$.  We define the notion of $V$ being \emph{sectorially arranged}, which generalises the standard conditions on $\rho$ in this setting (type 2 in \cite{gs}).  Denote by $P_\Sigma(\cdot) , P_{\bar\Sigma}(\cdot)$ the pressures defined on $(\Sigma, \sigma)$ and its completion $(\bar\Sigma, \bar\sigma)$, respectively. We obtain the following result.

\begin{theorem}
Let $(\Sigma ,\sigma)$ be a finite entropy topologically mixing CMS, $\rho$ totally bounded and $d$ the corresponding metric in $\Sigma$. If  $V$ is sectorially arranged  and $\phi \in UC_d(\Sigma)$, then 
\begin{equation*}
P_\Sigma(\phi)=P_{\bar\Sigma}(\phi).
\end{equation*}
\label{thm:CMS_main}
\end{theorem}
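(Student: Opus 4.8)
The plan is to establish the two inequalities separately, the only nontrivial content being $P_{\bar\Sigma}(\phi)\le P_\Sigma(\phi)$. The inequality $P_\Sigma(\phi)\le P_{\bar\Sigma}(\phi)$ should be essentially formal. Since $\phi\in UC_d(\Sigma)$ it extends uniquely to a continuous (indeed bounded, uniformly continuous) function $\bar\phi$ on the compact completion $\bar\Sigma$. Every $\sigma$-invariant probability measure $\mu$ on $\Sigma$ is in particular a $\bar\sigma$-invariant measure on $\bar\Sigma$ concentrated on $\Sigma$, and for such a measure both the Kolmogorov--Sinai entropy and the integral are unchanged in passing to $(\bar\Sigma,\bar\sigma,\bar\phi)$: entropy is intrinsic to the measure-preserving system (the systems are isomorphic mod $0$ as $\mu(\Sigma)=1$), and $\bar\phi=\phi$ on $\Sigma$. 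Applying the variational principle on the compact system $\bar\Sigma$ and taking the supremum over $\mu\in\mathcal{M}_\sigma(\Sigma)$ then yields $P_{\bar\Sigma}(\phi)\ge P_\Sigma(\phi)$.

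For the reverse inequality I would use the variational principle on $\bar\Sigma$ to write $P_{\bar\Sigma}(\phi)=\sup\{h_\mu(\bar\sigma)+\int\bar\phi\,d\mu\}$ over $\mu\in\mathcal{M}_{\bar\sigma}(\bar\Sigma)$, and reduce to ergodic measures. The key structural observation is that $\Sigma=\bigcap_{n\ge0}\bar\sigma^{-n}\Sigma$: a point remains in $\Sigma$ under all forward iterates exactly when none of its coordinates is a boundary (limit) symbol. Hence $\Sigma$ is invariant modulo every $\bar\sigma$-invariant measure, so for ergodic $\mu$ one has $\mu(\Sigma)\in\{0,1\}$. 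When $\mu(\Sigma)=1$ the quantity $h_\mu+\int\bar\phi\,d\mu$ is realised by a measure on $\Sigma$ and is therefore $\le P_\Sigma(\phi)$ by the previous paragraph. By affinity of $\mu\mapsto h_\mu$ and linearity of the integral under the ergodic decomposition, it then suffices to prove the bound $h_\nu(\bar\sigma)+\int\bar\phi\,d\nu\le P_\Sigma(\phi)$ for every ergodic $\nu$ with $\nu(\Sigma)=0$, that is, for every ergodic measure living on the boundary $\bar\Sigma\setminus\Sigma$.

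This boundary estimate is the heart of the argument and the step in which the \emph{sectorially arranged} hypothesis is used. The idea is to approximate such a $\nu$ from inside $\Sigma$. A $\nu$-typical point uses limit symbols $\xi\in\bar V\setminus V$, and the sectorial arrangement guarantees that each such $\xi$ is approached in $\rho$ by genuine vertices $v\in V$ whose admissible incoming and outgoing transitions are compatible with those seen along $\nu$-typical orbits. Concretely, for fixed $\eps>0$ and large $n$, I would take a near-maximal family of $\nu$-typical length-$n$ itineraries (of cardinality $\approx e^{nh_\nu}$ and with Birkhoff averages of $\bar\phi$ close to $\int\bar\phi\,d\nu$, via Shannon--McMillan--Breiman and the ergodic theorem), replace each boundary symbol by a $\rho$-close genuine symbol so that the resulting words become admissible paths in the graph, and assemble these words into an invariant measure on $\Sigma$, for instance by closing them into periodic orbits and averaging the associated empirical measures. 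Uniform continuity of $\phi$ controls the change in the Birkhoff sums under this $\rho$-small replacement, so the integral term is preserved up to $\eps$; the sectorial condition ensures the replacement preserves admissibility and does not collapse distinct itineraries, so entropy is preserved up to $o(n)$. This produces $\mu_\eps\in\mathcal{M}_\sigma(\Sigma)$ with $h_{\mu_\eps}+\int\phi\,d\mu_\eps\ge h_\nu+\int\bar\phi\,d\nu-\eps$, whence $P_\Sigma(\phi)\ge h_\nu+\int\bar\phi\,d\nu$.

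The hard part will be exactly this admissibility-preserving symbol replacement with control of entropy: I expect the sectorially arranged property to be precisely the hypothesis that, uniformly over the boundary symbols that can appear, supplies genuine vertices which are simultaneously $\rho$-close and rich enough in their transitions to carry the same branching complexity, so that no exponential loss of itineraries is incurred. Once this combinatorial core is in place, the remaining entropy bookkeeping (Shannon--McMillan--Breiman, concatenation of orbit segments, and the weak-$*$ convergence of the empirical measures together with upper semicontinuity arguments) should be routine.
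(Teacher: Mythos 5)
Your formal reductions are fine and match the paper: the inequality $P_\Sigma(\phi)\le P_{\bar\Sigma}(\phi)$, the restriction to ergodic measures on $\bar\Sigma$, and the dichotomy $\mu(\Sigma)\in\{0,1\}$ for ergodic $\mu$ (this last is Lemma~\ref{lem:struc} in the paper). The genuine gap is in what you yourself call the heart of the argument: you leave the entropy-preserving symbol replacement as an expectation, and that expectation misreads the hypothesis. Being sectorially arranged does not supply interior vertices ``rich enough in their transitions to carry the same branching complexity'' as boundary itineraries --- it does the opposite. Since for each $k$ the sectors $V_k^i$ are pairwise unconnected and have diameter $<\hat\delta_k\to 0$, any allowed transition $v_\infty\to v_\infty'$ between boundary points forces the two symbols into a common sector at every scale $k$, whence $v_\infty=v_\infty'$ (the paper's Lemma~\ref{lem:no_ent}). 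Consequently every ergodic measure on $\partial\Sigma$ is a Dirac mass at a fixed point $(v_\infty,v_\infty,\ldots)$ and has $h_\nu=0$: there is no boundary entropy to transfer into $\Sigma$, so your Shannon--McMillan--Breiman counting scheme is unnecessary, and moreover unprovable as stated, since the hypothesis gives no control on the branching of genuine vertices near the boundary. (Tellingly, in the paper's positive-entropy interior rich example --- the birth and death chain, where $h_{top}(\partial\Sigma_\rho)=\log 2$ and an entropy-transfer argument of your type genuinely is needed --- the vertex set is precisely \emph{not} sectorially arranged; what you sketch is closer to a direct proof of interior richness, cf.\ Remark~\ref{rem:ir}, which the sectorial hypothesis yields only through the zero-boundary-entropy route.)

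Once you know $h_\nu=0$ on the boundary, all that remains is the integral approximation, and there your periodic-orbit closing is essentially the paper's argument: by topological mixing any two vertices in $\{1,\ldots,N_k\}$ are joined in at most $M_k$ steps, so for each $v_\infty\in\partial V$ one builds a periodic point $\underline z$ spending $n$ iterates in the sector $V_k^{i(k,v_\infty)}$ and at most $M_k+2$ iterates in $\{1,\ldots,N_k\}$; uniform continuity of $\phi$ then gives $\bigl|\int\phi\,d\mu_{\underline z}-\phi(v_\infty,v_\infty,\ldots)\bigr|<\eps+\frac{M_k+2}{n}\|\phi\|_\infty$, which suffices since the ergodic boundary measures are exactly these point masses. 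So the correct completion of your outline is to replace the SMB machinery by Lemma~\ref{lem:no_ent} and keep only the closing construction for the integrals.
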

This result readily yields a description of the regularity properties of $P_{\Sigma}$. Indeed, the pressure is differentiable in a residual set and outside an Aronszajn null set of the space of uniformly continuous functions,  see Corollary \ref{cor:CMS gat}.  We describe  the structure of the  resulting compactifications, see Section~\ref{ssec:struc}. Several examples exhibiting different boundaries are provided. In particular, we produce an example for which the boundary is a Cantor set, see Section~\ref{ssec:comp}.   We also provide examples of  metrics  in $\Sigma$ so that  the set $UC_d(\Sigma)$ is large.  As noted above, our final example in Section~\ref{ssec:tangnoteq} is a natural setting where there is a tangent functional for the compactified system, but no equilibrium measure.

Gurevich \cite{gu2}, Walters \cite{Wal78}, Zargaryan \cite{Zar86} and also Gurevich and Savchenko \cite{gs} explored this compactification approach to define the pressure. In \cite{gu2, gs, Zar86} c functions that depend only on finitely many coordinates were  considered. In this paper we extend those results to continuous functions and to a larger class of  metrics. Walters  \cite{Wal78} studied a general case in which  functions are assumed to satisfy some forms of \emph{dynamical  continuity} \cite[p.149]{Wal78}. Other compactifications of CMS have been considered. Fiebig and Fiebig \cite{ff1}  construct compactifications of locally compact systems that are larger than the one point compactification. This work was continued in \cite{f}, where is it shown that non conjugated systems can have a conjugated compactification. Schwartz  \cite{sh} extended the notion of Martin boundary to locally compact CMS and was able to obtain results related to the corresponding transfer operator. He proved the existence of an eigenfunction corresponding to a function of summable variations. This compactification, however, depends upon the function  and therefore changes with it. Thus, this approach does not seem well suited to obtain the differentiability results we are interested in.
Also note that our results do not assume  the system to be  locally compact, nor that the function is of summable variations.

To fix notation we denote by $C(X)$ the space of real continued functions endowed with the supremum norm $\| \cdot \|$ and by $C(X)^*$ the dual space. We say that a sequence of probability measures  $(\mu_n)_n$ on the Borel space $X$ converges in the weak* topology to a probability measure $\mu$ if for every $f:X \to \R$ continuous and bounded we have  $\lim_{n \to \infty} \int f d \mu_n = \int f d \mu$.

\section{Differentiability of the pressure in the compact case} \label{sec:tc}
In this section the dynamical systems considered are continuous maps defined on compact spaces. We define the pressure and describe in detail its differentiability properties, both from the topological and measure theoretic point of view: the former is relatively well known in the field, but the latter statements are new in this context. The particular case of sub-shifts of finite type is then given as a standard application.

\subsection{Thermodynamic formalism in compact metric spaces} \label{ssec:def}
Let $(X, d)$ be a compact metric space, $T:X \to X$ a continuous map and $\phi \in C(X)$. Given $\epsilon >0$ and $n \in \N$, we say that a set $E \subset X$ is $(n,\epsilon)-$\emph{separated} if, given $x,y \in E$, there exists $j \in \{0,1, \dots, n-1\}$ such that
$d(T^j(x), T^j(y))> \epsilon$. Let
\begin{equation*}
Q_n(T,\phi, \epsilon)= \sup \left\{\sum_{x \in E} e^{\sum_{i=0}^{n-1} \phi(T^i x)} : E \subset X \text{ is } (n,\epsilon)-\text{separated}	\right\}.
\end{equation*}
Let $Q(T,\phi, \epsilon)=\limsup_{n\to \infty} \frac{1}{n} \log Q_n(T,\phi, \epsilon)$. The \emph{pressure} of $T$ is the map $P:C(X) \to \R \cup\{\infty\}$ defined by $P(\phi)=\lim_{\epsilon \to 0} Q(T,\phi, \epsilon)$ (see  \cite[Chapter 9]{Wal82} for details). Denote by $\M_T$ the space of $T$-invariant probability measures endowed with the weak* topology. The pressure satisfies the following properties \cite[Chapter 9]{Wal82}.

\begin{proposition}\label{prop:pre}
Let $T:X \to X$ be a continuous map on the compact metric space $X$ and $\phi \in C(X)$.
\begin{enumerate}
\item If $c \in \R$ and $\phi \in C(X)$ then $P(\phi + c)= P(\phi) +c$.
\item If $\phi, \psi  \in C(X)$ satisfy $\phi(x) \leq \psi (x)$ then $P(\phi) \leq P(\psi)$.
\item The function $P$ is Lipschitz continuous.
\item The function $P$ is convex.
\item \label{vp} $P(\phi)=\left\{h(\mu) + \int \phi \ d \mu : \mu \in \M_T\right\}$, where $h(\mu)$ denotes the entropy of $\mu$.
\end{enumerate}
\end{proposition}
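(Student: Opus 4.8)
The plan is to obtain properties (1)--(4) as direct consequences of the definition of $Q_n(T,\phi,\epsilon)$, leaving the variational principle (5) as the one substantial statement. For (1), I would note that $\sum_{i=0}^{n-1}(\phi+c)(T^i x)=\sum_{i=0}^{n-1}\phi(T^i x)+nc$, so that $Q_n(T,\phi+c,\epsilon)=e^{nc}Q_n(T,\phi,\epsilon)$ for every $(n,\epsilon)$-separated set $E$; applying $\tfrac1n\log$, then $\limsup_{n\to\infty}$, gives $Q(T,\phi+c,\epsilon)=Q(T,\phi,\epsilon)+c$, and letting $\epsilon\to0$ yields the claim. Property (2) is equally immediate: if $\phi\le\psi$ pointwise then $e^{\sum_{i=0}^{n-1}\phi(T^i x)}\le e^{\sum_{i=0}^{n-1}\psi(T^i x)}$, so over any fixed separated set the summands only grow when $\phi$ is replaced by $\psi$; the supremum defining $Q_n$ therefore increases and the inequality survives both limits.

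Property (3) I would deduce from (1) and (2). Since $\phi\le\psi+\|\phi-\psi\|$ pointwise, monotonicity and the translation identity give $P(\phi)\le P(\psi+\|\phi-\psi\|)=P(\psi)+\|\phi-\psi\|$; exchanging the roles of $\phi$ and $\psi$ then yields $|P(\phi)-P(\psi)|\le\|\phi-\psi\|$, so $P$ is $1$-Lipschitz (in particular finite-valued on $C(X)$ here). For convexity (4) I would fix $t\in(0,1)$ and apply H\"older's inequality with exponents $1/t$ and $1/(1-t)$ to an $(n,\epsilon)$-separated set $E$:
\begin{equation*}
\sum_{x\in E}e^{\sum_{i=0}^{n-1}(t\phi+(1-t)\psi)(T^i x)}\le\left(\sum_{x\in E}e^{\sum_{i=0}^{n-1}\phi(T^i x)}\right)^{t}\left(\sum_{x\in E}e^{\sum_{i=0}^{n-1}\psi(T^i x)}\right)^{1-t}.
\end{equation*}
Taking suprema over $E$, then logarithms and the two limits, gives $P(t\phi+(1-t)\psi)\le tP(\phi)+(1-t)P(\psi)$.

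The main obstacle is the variational principle (5), namely $P(\phi)=\sup\{h(\mu)+\int\phi\,d\mu:\mu\in\M_T\}$. I would split it into the two standard inequalities. The inequality $h(\mu)+\int\phi\,d\mu\le P(\phi)$ for a fixed $\mu\in\M_T$ is obtained by relating the measure-theoretic entropy of a fine partition to the exponential sums over separated sets, using the elementary bound $\sum_i a_i(\log a_i^{-1}+b_i)\le\log\sum_i e^{b_i}$ for a probability vector $(a_i)$. The reverse inequality $P(\phi)\le\sup_{\mu}\{h(\mu)+\int\phi\,d\mu\}$ is the genuinely delicate part: from nearly optimal $(n,\epsilon)$-separated sets one constructs empirical measures, passes to a weak* limit in the compact space $\M_T$, and verifies through the partition estimates (and upper semicontinuity of entropy) that the limiting invariant measure has $h(\mu)+\int\phi\,d\mu$ at least $P(\phi)$. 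Since this is precisely the classical variational principle, I would invoke \cite[Chapter 9]{Wal82} rather than reproduce the full construction.
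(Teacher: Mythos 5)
Your proposal is correct, but it is worth noting that the paper itself offers no proof at all: Proposition \ref{prop:pre} is stated as a standard fact with a citation to \cite[Chapter 9]{Wal82}, so your reconstruction of (1)--(4) supplies arguments the paper leaves entirely to the reference. Your derivations are exactly the textbook ones (they correspond to \cite[Theorem 9.7]{Wal82}, with the variational principle (5) being \cite[Theorem 9.10]{Wal82}): the translation identity $Q_n(T,\phi+c,\epsilon)=e^{nc}Q_n(T,\phi,\epsilon)$, monotonicity of the exponential sums, the Lipschitz bound deduced formally from (1) and (2), and H\"older's inequality with exponents $1/t$ and $1/(1-t)$ for convexity are all sound, and deferring (5) to Walters is the right call, since that is the one genuinely substantial ingredient and the paper does the same. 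The only point to sharpen is your parenthetical in (3) that $P$ is ``finite-valued on $C(X)$ here'': the paper's definition allows $P:C(X)\to\R\cup\{\infty\}$, and for a continuous map on a compact space $P$ may be identically $+\infty$ (when $h_{top}(T)=\infty$); your chain of inequalities shows only that $|P(\phi)-P(\psi)|\le\|\phi-\psi\|$ in the extended sense, so the precise statement is that $P$ is either identically $\infty$ or everywhere finite and $1$-Lipschitz. This is exactly how Walters phrases it, and it is harmless in context since the paper's subsequent results all assume finite entropy, but your proof as written silently assumes finiteness rather than deriving it.
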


\begin{remark}
Proposition \ref{prop:pre} item \eqref{vp} shows that, if $X$ is compact,  the pressure does not depend on the metric, as long as it generates the Borel $\sigma-$algebra of $X$. See  \cite[p.171]{Wal82} for a related discussion.
\end{remark}

The topological entropy of $T$ is defied as $h_{top}(T)=P(0)$. A measure $\mu \in \M_T$ such that $P(\phi)=h(\mu) + \int \phi \ d \mu$ is called \emph{equilibrium measure} for $\phi$.

\begin{remark}
Note that an equivalent definition of pressure can be given using open covers instead of $(n,\epsilon)-$separated sets (see \cite[Chapter 9]{{Wal82}}). Approaches using convex analysis to define the pressure have been used in \cite[Section 2]{ip} and \cite{bcmv}. 
\end{remark}

\subsection{Gateaux differentiability and equilibrium measures}
In this subsection we consider the regularity properties of the pressure considering a weak form of differentiability for functionals on Banach spaces.

\begin{definition}
The pressure map $P:C(X) \to \R$ is said to be \emph{Gateaux differentiable} at $\phi \in C(X)$ if for every $\psi  \in C(X)$ the following limit exists
\begin{equation*}
\lim_{t \to 0} \frac{P(\phi+t\psi) - P(\phi)}{t}.
\end{equation*}
\end{definition}

\begin{definition} Let $X$ be compact metric space, $T:X \to X$  a continuous map  of finite entropy and $\phi \in C(X)$. A measure $\mu \in \M_T$ is called a \emph{tangent functional} to $P$ at $\phi$ if  for every $\psi  \in C(X)$ we have that
\begin{equation*}
P(\phi+\psi) - P(\phi) \geq \int \psi d \mu.
\end{equation*}
Denote by $t_{\phi}(T)$ the collection of tangent functionals to $P$ at $\phi$.
\end{definition}

Note that for every $\phi \in C(X)$ the set  $t_{\phi}(T)$ is non empty and convex. Moreover, if $\mu$ is an equilibrium measure for $\phi$ then $\mu \in t_{\phi}(T)$ (see \cite[p. 225]{Wal82}). The following results obtained by Walters characterise Gateaux differentiability of the pressure in terms of tangent functionals (see \cite[Corollary 2 and Corollary 4]{Wal92}).

\begin{proposition} [Walters]  Let $X$ be compact metric space and $T:X \to X$  a continuous map  of finite entropy.
\begin{enumerate}
\item The pressure of $T$ is Gateaux differentiable at $\phi \in C(X)$ if and only if there is a unique tangent functional to $P$ at $\phi$.
\item The pressure of $T$ is Gateaux differentiable at $\phi \in C(X)$  if and only if there is a unique measure $\mu$ with the property that whenever $\mu_n\in \M_T$ satisfies
\begin{equation*}
\lim_{n \to \infty} \left( h(\mu_n) + \int \phi d\mu_n \right)= P(\phi),
\end{equation*}
then $\mu_n \to \mu$. In this case $\mu$ is the unique tangent functional.
\end{enumerate}
\label{prop:tang}
\end{proposition}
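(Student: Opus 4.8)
The statement is a specialisation to the pressure of the standard convex-analytic dictionary between differentiability and uniqueness of subgradients, so the plan is to exploit that $P$ is a continuous convex functional on the Banach space $C(X)$ (Proposition~\ref{prop:pre}, items (3)--(4)) together with the variational principle (item~\eqref{vp}). Throughout I will work with the one-sided directional derivative $D^+P(\phi;\psi):=\lim_{t\to 0^+}\tfrac{P(\phi+t\psi)-P(\phi)}{t}$, which exists for every $\psi\in C(X)$ because convexity makes the difference quotient $t\mapsto \tfrac{P(\phi+t\psi)-P(\phi)}{t}$ nondecreasing, and which is sublinear in $\psi$. The first thing to record is that $\mu\in\M_T$ is a tangent functional if and only if $\int\psi\,d\mu\le D^+P(\phi;\psi)$ for every $\psi$: applying the defining inequality to $t\psi$ and dividing by $t>0$ gives one direction, while monotonicity of the difference quotient gives $D^+P(\phi;\psi)\le P(\phi+\psi)-P(\phi)$ and hence the other.

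For item~(1), the forward implication is quick: if $P$ is Gateaux differentiable at $\phi$ with derivative $L$, then for any tangent functional $\mu$ I would divide $P(\phi+t\psi)-P(\phi)\ge t\int\psi\,d\mu$ by $t$ and let $t\to 0^+$ and $t\to 0^-$ to sandwich $L(\psi)=\int\psi\,d\mu$, so that $\mu$ is uniquely determined. For the converse the key step is to represent $D^+P(\phi;\cdot)$ as the support function of the tangent set. Fixing $\psi_0$, I would use the Hahn--Banach theorem to extend the linear map $s\psi_0\mapsto s\,D^+P(\phi;\psi_0)$ to a functional $\Lambda$ on $C(X)$ dominated by the sublinear map $D^+P(\phi;\cdot)$; since $D^+P(\phi;\psi)\le\|\psi\|$ by Lipschitz continuity, $\Lambda$ is bounded, and the normalisation and monotonicity in Proposition~\ref{prop:pre} together with the variational principle (applied to coboundaries $f-f\circ T$) force $\Lambda$ to be a $T$-invariant probability measure. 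Thus $\Lambda\in t_\phi(T)$ realises $D^+P(\phi;\psi_0)=\max_{\mu\in t_\phi(T)}\int\psi_0\,d\mu$. If this maximum is attained at a single $\mu$ for every direction, then $D^+P(\phi;\cdot)$ is linear, and comparing the limits as $t\to 0^+$ and $t\to 0^-$ yields Gateaux differentiability.

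For item~(2), I would route everything through the variational principle. Any weak* limit point $\nu$ of a sequence $(\mu_n)$ with $h(\mu_n)+\int\phi\,d\mu_n\to P(\phi)$ is a tangent functional, since letting $n\to\infty$ in $P(\phi+\psi)\ge h(\mu_n)+\int(\phi+\psi)\,d\mu_n$ gives $P(\phi+\psi)-P(\phi)\ge\int\psi\,d\nu$, using weak* compactness of $\M_T$ and continuity of $\psi$. This settles the forward implication: if $P$ is differentiable then $t_\phi(T)=\{\mu\}$ by item~(1), every limit point of a maximising sequence is $\mu$, and compactness promotes this to $\mu_n\to\mu$; uniqueness of such a measure is clear since maximising sequences exist ($P(\phi)<\infty$ as the entropy is finite). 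The converse is the crux. Assuming a unique $\mu$ to which all maximising sequences converge, I would show $D^+P(\phi;\psi)=\int\psi\,d\mu$ for every $\psi$ and appeal to item~(1). For each small $t>0$ pick $\mu_t\in\M_T$ with $h(\mu_t)+\int(\phi+t\psi)\,d\mu_t\ge P(\phi+t\psi)-t^2$; continuity of $P$ forces $h(\mu_t)+\int\phi\,d\mu_t\to P(\phi)$, so $(\mu_t)$ is maximising and any weak* limit equals $\mu$. Combining the near-optimality of $\mu_t$ for $\phi+t\psi$ with $P(\phi)\ge h(\mu_t)+\int\phi\,d\mu_t$ gives $\tfrac{P(\phi+t\psi)-P(\phi)}{t}\le\int\psi\,d\mu_t+t$; letting $t\to 0^+$ along a convergent subsequence yields $D^+P(\phi;\psi)\le\int\psi\,d\mu$, and the reverse inequality holds because $\mu\in t_\phi(T)$.

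The main obstacle, and where I would concentrate, is this final near-maximiser argument: it identifies the directional derivative with integration against a limit of maximising sequences \emph{without} assuming upper semi-continuity of the entropy map, hence without assuming that equilibrium measures exist. Everything else is the standard equivalence between Gateaux differentiability of a continuous convex function and uniqueness of its subgradient, specialised to the fact that here the admissible subgradients are exactly the invariant probability measures that are tangent functionals.
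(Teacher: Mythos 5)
Your proof is correct, but note that there is no proof in the paper to compare against: the proposition is stated as a quotation of \cite[Corollary 2 and Corollary 4]{Wal92}, so the relevant comparison is with Walters' original arguments. For item (1) you follow essentially the same convex-analytic route as Walters and Israel--Phelps: the Hahn--Banach extension of $s\psi_0\mapsto s\,D^+P(\phi;\psi_0)$ dominated by the sublinear directional derivative, followed by the standard verification that the extension is an invariant probability measure --- normalisation from $P(\phi+c)=P(\phi)+c$, positivity from monotonicity, and invariance from $P(\phi+f\circ T-f)=P(\phi)$, which the variational principle gives since coboundaries integrate to zero against every $\mu\in\M_T$. Where you genuinely diverge is item (2): Walters derives his Corollary 4 from a separate structural result identifying the tangent set $t_\phi(T)$ in terms of weak$^*$ limits of maximising sequences, whereas your near-maximiser argument --- choosing $\mu_t$ with $h(\mu_t)+\int(\phi+t\psi)\,d\mu_t\ge P(\phi+t\psi)-t^2$, checking via the Lipschitz bound that $(\mu_t)$ is itself maximising for $\phi$, and concluding $D^+P(\phi;\psi)\le\int\psi\,d\mu$ --- extracts the needed inequality directly and bypasses that theorem entirely. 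Your route buys a shorter, self-contained proof of this specific equivalence; Walters' buys the stronger description of the whole tangent set, from which his version of the statement (including ``$\mu$ is the unique tangent functional'') falls out immediately. Your handling of the delicate points is sound: you correctly observe that $\mu\in t_\phi(T)$ because weak$^*$ limits of maximising sequences are always tangent functionals (no upper semi-continuity of the entropy map is needed), and that maximising sequences exist because finite entropy makes $P(\phi)$ a finite supremum over the nonempty set $\M_T$, so the hypothesis of (2) is not vacuously satisfied by every measure. One small point worth making explicit rather than implicit: upgrading ``every subsequence has a further subsequence converging to $\mu$'' to $\mu_n\to\mu$, and extracting convergent subsequences from $(\mu_t)$, uses that $\M_T$ is weak$^*$ compact \emph{and metrisable}, which holds here because $X$ compact metric makes $C(X)$ separable.
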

The relation between tangent functionals and equilibrium measures depends on the continuity properties of the entropy map as explained in the following result (see \cite[Theorem 5]{Wal92})

\begin{proposition} [Walters] \label{lem_eq}
Let $X$ be a compact metric space, $T:X \to X$  a continuous map  of finite entropy and $\phi \in C(X)$. 
A measure $\mu \in \M_T$ which is a tangent functional to $P$ at $\phi$ is not an equilibrium measure for $\phi$ if and only if the entropy map, $\nu \to h(\nu)$ defined in $\M_T$,  is not upper semi-continuous at $\mu$.
 \end{proposition}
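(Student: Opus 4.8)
The plan is to recast both notions---being a tangent functional and being an equilibrium measure---in terms of a single auxiliary quantity attached to $\mu$, namely
\[
\hat h(\mu) := \inf_{g \in C(X)} \left( P(g) - \int g \, d\mu \right),
\]
and then to identify $\hat h$ with the upper semi-continuous regularisation of the entropy map. First I would unwind the definition of tangent functional: writing $g = \phi + \psi$, the inequality $P(\phi + \psi) - P(\phi) \geq \int \psi \, d\mu$ for every $\psi$ is equivalent to $P(g) - \int g\, d\mu \geq P(\phi) - \int \phi \, d\mu$ for every $g$. Since $g = \phi$ is admissible, this says precisely that the infimum defining $\hat h(\mu)$ is attained at $g=\phi$; that is, $\mu \in t_\phi(T)$ if and only if $\hat h(\mu) = P(\phi) - \int \phi \, d\mu$. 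Hence for a tangent functional $\mu$ one has $P(\phi) = \hat h(\mu) + \int \phi \, d\mu$, which is to be compared with the equilibrium relation $P(\phi) = h(\mu) + \int \phi \, d\mu$.

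The heart of the argument, and the step I expect to be the main obstacle, is to show that for every $\mu \in \M_T$ one has $\hat h(\mu) = \tilde h(\mu)$, where $\tilde h(\mu) := \limsup_{\nu \to \mu} h(\nu)$ is the smallest upper semi-continuous function dominating $h$. One inequality is soft: for fixed $g$ the map $\mu \mapsto P(g) - \int g\, d\mu$ is weak* continuous and affine, so $\hat h$ is itself concave and upper semi-continuous; and the variational principle (item~\eqref{vp} of Proposition~\ref{prop:pre}) gives $P(g) - \int g\, d\mu \geq h(\mu)$, whence $\hat h \geq h$ and therefore $\hat h \geq \tilde h$. For the reverse inequality I would first upgrade the variational principle to
\[
P(g) = \sup_{\nu \in \M_T}\left( \tilde h(\nu) + \int g\, d\nu \right),
\]
by choosing $\nu_k \to \nu$ with $h(\nu_k) \to \tilde h(\nu)$ and using weak* continuity of $\nu \mapsto \int g \, d\nu$; the opposite bound is immediate from $\tilde h \geq h$. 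This exhibits $P$ as the concave conjugate of the concave, upper semi-continuous function $\tilde h$ on the weak*-compact convex set $\M_T$, and applying the Fenchel--Moreau biconjugation theorem returns $\tilde h(\mu) = \inf_g(P(g) - \int g\, d\mu) = \hat h(\mu)$. The delicate points are verifying that $\tilde h$ is genuinely concave (being the regularisation of the affine, hence concave, entropy map) and invoking finiteness of the entropy to guarantee that $\tilde h$ is proper, so that the duality applies.

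Finally I would assemble the equivalence. Fix a tangent functional $\mu \in t_\phi(T)$, so that $P(\phi) = \tilde h(\mu) + \int \phi \, d\mu$ by the two preceding steps. Comparing with the equilibrium identity, $\mu$ is an equilibrium measure precisely when $h(\mu) = \tilde h(\mu)$; since $\tilde h(\mu) \geq h(\mu)$ always, $\mu$ fails to be an equilibrium measure if and only if $h(\mu) < \tilde h(\mu)$. By the very definition of $\tilde h$ as the upper semi-continuous regularisation, the equality $\tilde h(\mu) = h(\mu)$ is exactly the statement that $\nu \mapsto h(\nu)$ is upper semi-continuous at $\mu$, while the strict inequality $\tilde h(\mu) > h(\mu)$ is the failure of upper semi-continuity at $\mu$. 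This yields the stated biconditional.
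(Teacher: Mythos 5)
Your proposal is correct, but note that the paper does not actually prove this proposition: it is quoted from Walters \cite[Theorem 5]{Wal92}, so the comparison is with his cited argument rather than with anything in the text. The two proofs are different packagings of the same convex duality. Walters works with the characterisation of tangent functionals as weak* limits of almost-maximising sequences --- exactly the flavour of item (2) of the Walters proposition quoted just before this one: $\mu\in t_\phi(T)$ if and only if there exist $\mu_n\in\M_T$ with $\mu_n\to\mu$ and $h(\mu_n)+\int\phi\,d\mu_n\to P(\phi)$ --- from which the biconditional follows by comparing $\limsup_n h(\mu_n)$ with $h(\mu)$. You replace this with the identity $\hat h=\tilde h$ obtained by biconjugation; the two are in fact equivalent, since for a tangent functional $\mu$ the inequality $\hat h(\mu)\le\tilde h(\mu)$ says precisely that such an almost-maximising sequence converging to $\mu$ exists. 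It is also worth observing that your argument shows the two directions are not equally deep: the implication ``tangent and equilibrium $\Rightarrow$ usc at $\mu$'' needs only your soft inequality $\tilde h\le\hat h$ (each $\nu\mapsto P(g)-\int g\,d\nu$ dominates $h$ and is continuous), while all the Fenchel--Moreau content is consumed by the converse. Your flagged delicate points do check out: the entropy map is affine on $\M_T$ \cite[Theorem 8.1]{Wal82}, so the closure of its hypograph is convex and $\tilde h$ is genuinely concave; $\M_T$ is weak*-compact and metrisable (as $C(X)$ is separable for $X$ compact metric), which legitimises your sequential form of the regularisation $\tilde h(\mu)=\limsup_{\nu\to\mu}h(\nu)$; and the finite entropy hypothesis bounds $h$, giving properness and keeping all quantities finite when you cancel $\int\phi\,d\mu$ in the final comparison. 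What your route buys is an isolated, reusable statement --- $\inf_{g}\bigl(P(g)-\int g\,d\mu\bigr)$ equals the upper semi-continuous concave envelope of the entropy map --- making explicit where compactness, affinity and finiteness enter; Walters' route avoids invoking biconjugation machinery by proving the sequence characterisation directly via separation arguments.
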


In the next subsections we consider the problem of determining how large the set  at which the pressure is Gateaux differentiable is. We address this question from a topological and a measure theoretic of point of view.

\subsection{Gateaux differentiability from a topological perspective} Recall that a subset of a topological space is a $G_{\delta}$-set if it is a countable intersection of open sets and a dense $G_{\delta}$-set is called \emph{residual}. 
In 1933, Mazur proved that:  if $E$ is a separable Banach space and $F$ a continuous convex function defined on a convex open subset $D$ of $E$, then the set of point where $F$ is  Gateaux differentiable is a residual set in $U$ (see \cite[Theorem 1.20]{p3} and \cite[Section 3]{ip}). The following is a particular case of this result.

\begin{proposition}\label{thm_topo}
Let $X$ be a compact metric space and $T:X \to X$  a continuous map  of finite entropy. The set of points at which the pressure $P:C(X) \to \R$ is  Gateaux differentiable is a residual set in $C(X)$.
\end{proposition}
That is, the pressure is Gateaux differentiable in a large set from the topological point of view.

\begin{remark}
Mazur's Theorem holds in this setting because $X$ is compact and hence the Banach space $C(X)$ is separable. 
\end{remark}

\subsection{Gateaux differentiable from a measure theoretic perspective}

A classical result by Rademacher states that every Lipschitz map $F:\R^n \to \R^m$ is  Lebesgue almost everywhere differentiable. At least since the early 1970s, a great deal of work has been devoted to extend this result to Lipschitz maps between Banach spaces with respect to the Gateaux derivate. In order to do so, a notion of \emph{null set}  in Banach spaces is required. Several such notions  have been proposed, for example: cube null, Gauss null and Aronszajn null  (see \cite[Section 6]{bl} and references therein for definitions, properties and equivalences). 

Let $B$ be a separable Banach space. For each  $0 \neq a \in  B$ let $\mathcal{A}(a)$ be the family of Borel sets $A \subset B$ which intersect each line parallel to $a$ in a set of one-dimensional Lebesgue measure zero. That is, for every $x \in B$ we have
$\text{Leb}(\{t \in \R : x +ta \in A  \})=0$. If $(a_n)_n$ is a sequence of non zero elements in $B$, we denote by $\mathcal{A}((a_n))$ the collection of Borel sets $A$ such that $A=\bigcup A_n$, where $A_n \in \mathcal{A}(a_n)$ for every $n \in \N$. The following definition was proposed by Aronszajn, see \cite{a} and \cite[pp.141-142]{bl}.

\begin{definition}
A Borel set $A$ in a separable Banach space $B$ is called \emph{Aronszajn null} if $A$ belongs to $\bigcap \mathcal{A}((a_n))$, where the intersection is taken over  all sequences whose linear span is dense in $B$. 
\end{definition}

The class of Aronszajn null sets is closed under countable unions, hereditary class of Borel subsets of $B$, which does not contain open sets. Moreover, if $B$ is finite dimensional then it coincides with the family of Borel sets of zero Lebesgue measure. Note that Cs\"ornyei \cite[Theorem 1]{c} proved that in every separable Banach space the  class of Aronszajn null sets, Gauss null sets and cube null sets coincide. Aronszajn \cite[Main Theorem]{a} (see also \cite[Theorem 6.42]{bl}) extended Rademacher's Theorem to separable Banach spaces replacing the notion of zero Lebesgue measure with that of Aronszajn null.  Since the pressure is a  Lipschitz map from a separable Banach space to the real numbers, this result describes  its differentiability properties from a measure theoretic point of view.

\begin{proposition}\label{thm:bl}
Let $X$ be compact metric space and $T:X \to X$  a continuous map  of finite entropy and $U \subset C(X)$ be an open set.
The set of points at which the pressure $P:U \to \R$ is not Gateaux differentiable is Aronszajn null.
\end{proposition}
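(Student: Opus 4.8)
The plan is to obtain the statement as a direct consequence of Aronszajn's extension of Rademacher's theorem, recalled just above. Two ingredients are needed: the domain must be a separable Banach space, and the pressure must be Lipschitz. The first holds because $X$ is compact metric, hence separable, which forces $C(X)$ with the supremum norm to be separable as well (this is the observation already used in the remark following Proposition~\ref{thm_topo}). The second is precisely Proposition~\ref{prop:pre}\,(3), which gives that $P$ is Lipschitz continuous on all of $C(X)$, and in particular on the open set $U$; note that finite entropy guarantees that $P$ is real valued rather than taking the value $\infty$.

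With these two facts in hand, Aronszajn's theorem \cite[Main Theorem]{a} (see also \cite[Theorem 6.42]{bl}) applies directly: a Lipschitz map from a separable Banach space into a Banach space with the Radon--Nikodym property---in particular into $\R$---is Gateaux differentiable outside an Aronszajn null set. Applying this to $P$ immediately yields that its set of non-Gateaux-differentiability points is Aronszajn null.

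The only point requiring a little care is that the proposition is stated for an arbitrary open set $U \subset C(X)$ rather than for the whole space. This is handled by observing that Gateaux differentiability is a local property and that the family of Aronszajn null sets is a $\sigma$-ideal of Borel sets, as recalled in the discussion preceding the proposition. Thus the set of points of $U$ at which $P$ fails to be differentiable is contained in the corresponding set for $P$ on all of $C(X)$, which is Aronszajn null; being a subset of an Aronszajn null set, it is itself Aronszajn null. I expect no genuine obstacle here: the entire content is the verification of the hypotheses of the quoted theorem, the main subtlety being only to confirm that the target $\R$ has the Radon--Nikodym property, so that Aronszajn's conclusion concerns exactly the Gateaux derivative.
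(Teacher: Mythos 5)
Your proof is correct and matches the paper's own argument, which likewise deduces the proposition directly from Aronszajn's theorem \cite[Main Theorem]{a} (see also \cite[Theorem 6.42]{bl}) applied to the pressure as a Lipschitz map on the separable Banach space $C(X)$. Your additional care about the restriction to the open set $U$ (via the hereditary property of Aronszajn null sets and the local nature of Gateaux differentiability) and about finiteness of $P$ only makes explicit what the paper leaves implicit.
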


\subsection{Fr\'echet derivative} In this subsection we show that the stronger notion of Fr\'echet derivative is too strong for our purposes.  
\begin{definition}
Let $X$ be compact metric space, $T:X \to X$  a continuous map of finite entropy and $\phi \in C(X)$. 
The pressure $P:C(X) \to \R$ is Fr\'echet differentiable at $\phi$ if there exists $\Gamma \in C(X)^*$ such that
\begin{equation*}
\lim_{\psi \to 0} \frac{|P(\phi+\psi) -P(\phi) -\Gamma(\psi)|}{\| \psi \|}=0.
\end{equation*}
\end{definition}
If $P$ is Fr\'echet differentiable then it is Gateaux differentiable and in that case $\Gamma(\psi)= \int \psi \ d \mu_{\phi}$, where $\mu_{\phi}$ is the unique tangent functional at $\phi$.  A version of the reverse implication with stronger assumptions was obtained by Israel and Phelps (see \cite[p.144]{ip}).
\begin{proposition} \label{prop:gf}
If $P$ is Gateaux differentiable on an open set then it is Fr\'echet differentiable.
\end{proposition}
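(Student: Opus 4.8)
The plan is to recast Fréchet differentiability as a quantitative (uniform) strengthening of the pointwise information already contained in Gateaux differentiability, and then to use the fact that all tangent functionals of $P$ lie in the weak*-compact set $\M_T$ to perform the required upgrade. By Proposition \ref{prop:pre} the pressure is convex and Lipschitz, so its tangent functionals at $\phi$ coincide with the subgradients of $P$ at $\phi$, and by Walters' characterisation the hypothesis says that on the open set $U$ the set $t_\phi(T)$ is a singleton $\{\mu_\phi\}$, with $\mu_\phi \in \M_T$, depending norm-to-weak* continuously on $\phi$. First I would record the convex-analytic criterion I intend to verify: $P$ is Fréchet differentiable at $\phi_0$ if and only if the symmetric quotient
\[
g_t(\psi) = \frac{P(\phi_0 + t\psi) + P(\phi_0 - t\psi) - 2P(\phi_0)}{t}
\]
tends to $0$ uniformly for $\|\psi\| \le 1$ as $t \downarrow 0$. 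For fixed $t$ the function $g_t$ is convex and nonnegative, and $g_t(\psi) \downarrow 0$ pointwise as $t \downarrow 0$, the pointwise limit being exactly Gateaux differentiability at $\phi_0$; thus the whole issue is upgrading this pointwise (Dini-type) convergence to uniform convergence on the unit ball.

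I would run this as an argument by contradiction. If $P$ fails to be Fréchet differentiable at some $\phi_0 \in U$, there are $\epsilon > 0$ and $\psi_n \to 0$ in $C(X)$ with $\psi_n \neq 0$ and $P(\phi_0 + \psi_n) - P(\phi_0) - \int \psi_n \, d\mu_{\phi_0} \ge \epsilon \|\psi_n\|$. Putting $\mu_n := \mu_{\phi_0 + \psi_n}$ (well defined for large $n$, since $U$ is open) and using the tangent-functional inequality $P(\phi_0 + \psi_n) - P(\phi_0) \le \int \psi_n \, d\mu_n$, I get $\int \psi_n \, d(\mu_n - \mu_{\phi_0}) \ge \epsilon \|\psi_n\|$, whence $\|\mu_n - \mu_{\phi_0}\| \ge \epsilon$ in $C(X)^*$. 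On the other hand, norm-to-weak* continuity of the derivative gives $\mu_n \to \mu_{\phi_0}$ in the weak* topology. So the entire statement comes down to excluding a sequence in $\M_T$ that converges weak* to $\mu_{\phi_0}$ yet stays at norm-distance $\ge \epsilon$ from it.

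The hard part is precisely this weak*-to-norm passage, which is false for general continuous convex functions (this is why Gateaux differentiability does not imply Fréchet differentiability without extra hypotheses) and must be forced by the special features of the pressure. Here I would use that $X$ is compact metric, so $C(X)$ is separable and $\M_T$ is weak*-compact and weak*-metrizable; consequently the single-valued subgradient map $\phi \mapsto \mu_\phi$ is a norm-to-weak* continuous monotone operator on $U$ with relatively weak*-compact range. Invoking a Baire-category argument for such monotone operators (in the spirit of Kenderov and Preiss) then yields norm-to-norm continuity of $\phi \mapsto \mu_\phi$, and hence Fréchet differentiability of $P$, on a residual subset of $U$; the refinement of Israel and Phelps promotes this by additionally exploiting the affine structure of the entropy map and the compactness of $\M_T$ to rule out the escaping sequence above directly. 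I expect the identification of the correct compactness input that contradicts $\|\mu_n - \mu_{\phi_0}\| \ge \epsilon$ while $\mu_n \to \mu_{\phi_0}$ weak* to be the main obstacle, with separability of $C(X)$ and weak*-compactness of $\M_T$ the two indispensable ingredients.
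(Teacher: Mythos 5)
Your setup is the standard \v{S}mulyan-type reduction and everything you actually verify is sound: the symmetric-quotient criterion for Fr\'echet differentiability of a convex function, the extraction from a failure of Fr\'echet differentiability of $\psi_n\to 0$ with $\int\psi_n\,d(\mu_n-\mu_{\phi_0})\ge \epsilon\|\psi_n\|$ (hence $\|\mu_n-\mu_{\phi_0}\|\ge\epsilon$ in $C(X)^*$), and the norm-to-weak* continuity of $\phi\mapsto\mu_\phi$ on the open set. But the argument stops exactly where the proposition begins: excluding a sequence of tangent measures that converges weak* to $\mu_{\phi_0}$ while remaining at norm distance $\ge\epsilon$ \emph{is} the entire content of the statement, and your text openly defers it (``the main obstacle''). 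For calibration, the paper itself offers no proof of this proposition; it attributes it to Israel and Phelps \cite[p.144]{ip}. Whatever mechanism is used there must exploit structure specific to the pressure, because, as you yourself note, the implication is false for general continuous convex (even Lipschitz) functions on a Banach space; so naming separability of $C(X)$ and weak*-compactness of $\M_T$ as ``the two indispensable ingredients'' cannot be right --- those hold for arbitrary convex functions on $C(X)$, where the conclusion fails. What you have is an accurate reformulation of the problem, not a proof.

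The fallback you sketch does not close the gap either, for two independent reasons. First, a Kenderov--Preiss-type genericity theorem for the monotone map $\phi\mapsto\mu_\phi$ would at best yield norm-to-norm continuity points, hence Fr\'echet differentiability, on a \emph{residual subset} of $U$ --- strictly weaker than the proposition, which asserts it at every point of the open set (the residual version would still suffice for the paper's one application, deriving item (4) from item (5) in the sub-shift proposition, but it is not the stated result). Second, such generic results are simply unavailable on $C(X)$ without pressure-specific input: for $X$ a perfect compact metric space (e.g.\ a subshift), $C(X)$ is separable with nonseparable dual, hence not Asplund, and indeed the supremum norm on such $C(X)$ has no Fr\'echet points at all; so convex functions on $C(X)$ are not generically Fr\'echet differentiable, and no soft category argument can rule out your escaping sequence. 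The closing appeal to ``the refinement of Israel and Phelps'' exploiting ``the affine structure of the entropy map and the compactness of $\M_T$'' is a placeholder rather than an argument. A useful sanity check on what is missing: by the Walters lemma quoted immediately after the proposition, Fr\'echet differentiability at $\phi$ forces $P$ to be affine on a neighbourhood of $\phi$, so any correct proof must in effect show that Gateaux differentiability of $P$ throughout $U$ makes the tangent functional locally constant; nothing in your outline produces this rigidity, and it is precisely the pressure-specific input your compactness discussion is searching for.
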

The following result was proved by Walters in \cite[Theorem 6 (vi)]{Wal92} (see also \cite{ip} and  \cite[Proposition 1]{dv}), it shows that the Fr\'echet derivative is not a well suited notion of derivative for dynamical systems having many invariant measures.
\begin{lemma}
If $P$ is Fr\'echet differentiable at $\phi$ then it is affine on a neighbourhood of $\phi$.
\end{lemma}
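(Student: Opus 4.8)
The plan is to show that Fréchet differentiability forces the (necessarily unique) tangent functional to be tangent at every nearby potential, which is exactly affineness. Write $\mu\in\M_T$ for the tangent functional to $P$ at $\phi$; since Fréchet differentiability implies Gateaux differentiability, $\mu$ is unique and $\Gamma(\psi)=\int\psi\,d\mu$. Set
\[
g(\psi):=P(\phi+\psi)-P(\phi)-\int\psi\,d\mu .
\]
Because $\mu$ is tangent at $\phi$ we have $g\ge 0$, with $g(0)=0$, and $g$ is convex since $P$ is. A direct computation shows that $\mu$ is a tangent functional at $\phi+\psi$ if and only if $g(\psi+\eta)\ge g(\psi)$ for all $\eta\in C(X)$, that is, if and only if $g(\psi)=0$, as $0$ is the global minimum of $g$. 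Hence proving that $P$ is affine on a neighbourhood of $\phi$ is equivalent to proving that $g\equiv 0$ near $0$, and the Fréchet hypothesis enters as the uniform bound: for every $\epsilon>0$ there is $\delta_\epsilon>0$ with $0\le g(\chi)\le\epsilon\|\chi\|$ whenever $\|\chi\|<\delta_\epsilon$.

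The technical engine I would use is a norm-continuity statement for tangent functionals. Fix a sequence $\psi_n\to 0$ and choose $\nu_n\in t_{\phi+\psi_n}(T)$, which is non-empty. Rewriting the tangent-functional inequality for $\nu_n$ at $\phi+\psi_n$ in terms of $g$ gives, for every $\eta$,
\[
\int\eta\,d(\nu_n-\mu)\le g(\psi_n+\eta)-g(\psi_n)\le g(\psi_n+\eta).
\]
Taking the supremum over $\|\eta\|\le r$ and invoking the uniform Fréchet bound yields $r\,\|\nu_n-\mu\|_{C(X)^*}\le\epsilon(\|\psi_n\|+r)$ as soon as $\|\psi_n\|+r<\delta_\epsilon$; choosing $r=\delta_\epsilon/2$, then letting $n\to\infty$ and finally $\epsilon\to 0$, shows that $\nu_n\to\mu$ in the total variation (dual) norm, not merely weak*. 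This step is where the sup norm on $C(X)$ and the uniformity built into the Fréchet derivative are indispensable, and it is the analytically substantive part of the argument.

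It remains to promote norm convergence of the tangent functionals to the equality $\nu_n=\mu$, equivalently $g(\psi_n)=0$, and this is the step I expect to be the main obstacle. Soft arguments alone do not suffice: convexity, the translation covariance $P(\phi+c)=P(\phi)+c$, and the uniform bound together only recover $g(\psi_n)=o(\|\psi_n\|)$, which is perfectly consistent with $g$ being nonzero (the finite-dimensional slices of $P$, such as a log-sum-exp, are smooth yet strictly convex, so no contradiction can come from a single direction). The resolution must exploit the specific rigidity of equilibrium states: when the entropy map is upper semi-continuous the $\nu_n$ are equilibrium measures, so that $h(\nu_n)\to h(\mu)$, and one then invokes that distinct equilibrium measures for nearby potentials are norm-separated — in the relevant systems they are mutually singular — so that norm convergence $\nu_n\to\mu$ forces $\nu_n=\mu$ for all large $n$. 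This contradicts $g(\psi_n)>0$ and yields $g\equiv 0$ on a ball about $0$, i.e.\ $P$ is affine near $\phi$. The crux is thus precisely this norm-separation of tangent functionals, which is the structural input supplied by Walters' analysis of the equilibrium states.
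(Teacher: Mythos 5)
Your preparatory work is sound: the reduction to $g(\psi):=P(\phi+\psi)-P(\phi)-\int\psi\,d\mu$, the equivalence ``$\mu\in t_{\phi+\psi}(T)$ iff $g(\psi)=0$'', and the \v{S}mul'yan-type estimate showing that the Fr\'echet hypothesis forces $\|\nu_n-\mu\|_{C(X)^*}\to 0$ for tangent functionals $\nu_n\in t_{\phi+\psi_n}(T)$, $\psi_n\to 0$, are all correct. But the gap is exactly where you flag it, and the resolution you sketch does not work. First, upper semi-continuity of the entropy map is not a hypothesis of the lemma, which holds for every continuous finite-entropy map on a compact metric space; without it, by Proposition \ref{lem_eq} your $\nu_n$ need not be equilibrium measures at all. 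Second, and more seriously, the asserted ``norm-separation of distinct equilibrium measures for nearby potentials'' is not a theorem: mutual singularity (hence total-variation distance $2$) holds only for distinct \emph{ergodic} measures, whereas tangent functionals and equilibrium measures are merely invariant, and at a first-order phase transition the equilibrium states form a nontrivial convex set inside which distinct measures lie at arbitrarily small norm distance. So norm convergence $\nu_n\to\mu$ cannot by itself force $\nu_n=\mu$, and the contradiction you need never materialises. (The paper itself does not argue this way: it simply quotes Walters' characterisation \cite[Theorem 6 (v), (vi)]{Wal92}.)

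The correct completion, which is essentially Walters' argument, applies your norm estimate not to tangent functionals at perturbed potentials but to \emph{ergodic} almost-maximizing sequences at $\phi$ itself. If $\mu_n\in\M_T$ are ergodic with $h(\mu_n)+\int\phi\,d\mu_n\ge P(\phi)-\varepsilon_n$, $\varepsilon_n\to 0$, then $P(\phi+\eta)\ge P(\phi)-\varepsilon_n+\int\eta\,d\mu_n$ for all $\eta$, so $g(\eta)\ge\int\eta\,d(\mu_n-\mu)-\varepsilon_n$, and taking the supremum over $\|\eta\|\le r$ against the Fr\'echet bound gives $\|\mu_n-\mu\|\le\varepsilon+\varepsilon_n/r$; hence $\mu_n\to\mu$ in norm. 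Since any two \emph{distinct} ergodic measures are mutually singular and thus at distance $2$, the sequence must be eventually constant, equal to $\mu$. This shows $\mu$ is ergodic, is the unique equilibrium measure, and that there is a uniform gap $2\varepsilon_0:=P(\phi)-\sup\{h(\nu)+\int\phi\,d\nu:\nu \text{ ergodic},\ \nu\neq\mu\}>0$. Affineness then follows without any norm topology: each free energy $\nu\mapsto h(\nu)+\int(\phi+\psi)\,d\nu$ moves by at most $\|\psi\|$ under the perturbation, so for $\|\psi\|<\varepsilon_0/2$ the variational principle over ergodic measures (\cite[Corollary 9.10.1]{Wal82}) is realized only by $\mu$, whence $P(\phi+\psi)=P(\phi)+\int\psi\,d\mu$ on that ball. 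The dynamical rigidity enters through mutual singularity of ergodic measures, not through any separation property of equilibrium states; that substitution is the missing idea in your proposal.
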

This follows from another characterisation of Fr\'echet differentiability  \cite[Theorem 6 (v)]{Wal92} namely, the pressure is Fr\'echet differentiable at $\phi$ if and only if there exists a unique equilibrium measure $\mu_{\phi}$ and
\begin{equation*}
P(\phi) > \sup \left\{h(\mu) + \int \phi d \mu :  \mu \in \M_T, \text{ ergodic and } \mu \neq \mu_{\phi}		\right\}.
\end{equation*}
That is, there is no sequence of ergodic measures $(\mu_n)$ in $\M_T$ different from $\mu_{\phi}$ such that
\begin{equation*}
\lim_{n \to \infty} \left(	h(\mu_n) + \int \phi d \mu_n	\right) =P(\phi).
\end{equation*}
In particular, if the set of ergodic measures is entropy dense then the pressure is not Fr\'echet differentiable at any point. See \cite[Theorem 2.1]{ps} for precise definitions and weak conditions which implies entropy denseness of ergodic measures.

\subsection{Sub-shifts of finite type} \label{ssec:SFT}
We conclude this section considering the particular, but important, case in which $(\Sigma, \sigma)$ is a sub-shift of finite type defined on a finite alphabet.   That is, let $N\ge 2$ and $A=(a_{i, j})_{i, j}$  a $N\times N$ matrix with entries in $\{0, 1\}$. The symbolic space is defined by
$$\Sigma= \left\{ \underline{x}=(x_0, x_1, \ldots): x_i \in\{1, \ldots, N\} \text{ and } a_{x_i, x_{i+1}}=1  \text{ for each } i\in \N_0  \right\}.$$ 
It is a compact space with the topology inherited from the product topology.  The function $d(\underline x,\underline y)$ defined by $1$ if  $x_0 \neq y_0$; equal to $2^{-k}$ if $x_i=y_i$ for $i \in \{0, \dots, k\}$ and $x_{k+1} \neq y_{k+1}$; and $0$ of $\underline x=\underline y$, is a metric on $\Sigma$, and thus induces the `cylinder topology' (see Section~\ref{sec:CMS} for more information). The dynamics is the left shift $\sigma:\Sigma\to \Sigma$, i.e., $\sigma (x_0, x_1, \ldots)= (x_1, x_2, \ldots)$.
We will  assume that this system is topologically mixing, which means that for each $i, j\in \{1, \ldots, N\}$ there is a finite collection $i=x_0, x_1, \ldots, x_\ell=j$ such that $A_{x_k, x_{k+1}}=1$ for $k=0, \ldots, \ell-1$. In this setting the entropy map is upper semi-continuous \cite[Theorem 8.2]{Wal82}. We have the following results, that are consequences of the more general statements of the previous sections.
\begin{proposition} Let $(\Sigma, \sigma)$ be a  topologically mixing sub-shift of finite type defined on a finite alphabet and $P:C(\Sigma) \to \R$ the  pressure.
\begin{enumerate}
\item The pressure $P$ is Gateaux differentiable in a $G_{\delta}$-set.
\item  The pressure $P$ is Gateaux differentiable  outside an  Aronszajn null set.
\item The pressure $P$ is Gateaux differentiable at $\phi$ if and only if there exists a unique equilibrium measure $\mu_{\phi}$ for $\phi$.
\item The pressure $P$ is not  Gateaux differentiable in a dense set.
\item The pressure $P$ is not Fr\'echet differentiable at any point.
\end{enumerate}
\end{proposition}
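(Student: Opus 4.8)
The plan is to derive each of the five items from the general results of the preceding subsections, specialised to the topologically mixing sub-shift of finite type $(\Sigma,\sigma)$ on a finite alphabet. The key structural feature we will exploit throughout is that in this setting the entropy map $\nu \mapsto h(\nu)$ is upper semi-continuous on the compact space $\M_\sigma$.

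Items (1) and (2) require no work beyond unwinding the definitions. Applying Proposition \ref{thm_topo} with $U=C(\Sigma)$ shows that the set on which $P$ is Gateaux differentiable is residual, hence in particular a (dense) $G_\delta$-set; applying Proposition \ref{thm:bl} with $U=C(\Sigma)$ shows that its complement, the set of non-differentiability, is Aronszajn null.

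For item (3), I would first recall that every equilibrium measure for $\phi$ is a tangent functional to $P$ at $\phi$, and that, since $\M_\sigma$ is compact and the entropy map is upper semi-continuous, the supremum in the variational principle (Proposition \ref{prop:pre}(\ref{vp})) is attained, so equilibrium measures exist. Next, Proposition \ref{lem_eq} asserts that a tangent functional fails to be an equilibrium measure precisely where the entropy map is not upper semi-continuous; by upper semi-continuity this never occurs here, so the set of tangent functionals at $\phi$ coincides with the set of equilibrium measures for $\phi$. Combining this identification with Walters' characterisation that Gateaux differentiability at $\phi$ is equivalent to the uniqueness of the tangent functional yields the desired equivalence between Gateaux differentiability and uniqueness of the equilibrium measure.

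The deepest input is item (5), from which item (4) then follows formally; this is where I expect the main obstacle to lie. I would invoke that a topologically mixing sub-shift of finite type on a finite alphabet has the specification property, so that its ergodic measures are entropy dense in $\M_\sigma$ (see \cite[Theorem 2.1]{ps}). The concluding remark of the subsection then shows that $P$ is not Fr\'echet differentiable at any point, which is item (5). For item (4), suppose for contradiction that the Gateaux differentiability set contained a nonempty open set $U$; then Proposition \ref{prop:gf} would force $P$ to be Fr\'echet differentiable throughout $U$, contradicting item (5). Hence the non-differentiability set meets every open set, i.e.\ it is dense. Everything except the entropy denseness of the ergodic measures is thus a bookkeeping reduction to the statements already established, so the crux of the argument is that specification input.
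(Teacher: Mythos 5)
Your proposal is correct and takes essentially the same route as the paper: items (1)--(4) are handled identically (Propositions~\ref{thm_topo} and \ref{thm:bl} for (1)--(2), upper semi-continuity of entropy plus Proposition~\ref{lem_eq} and Walters' tangent-functional characterisation for (3), and Proposition~\ref{prop:gf} to deduce (4) from (5)). The only difference is cosmetic: for item (5) the paper cites Walters \cite[Corollary 9]{Wal92} directly, whereas you derive it from specification and entropy density of ergodic measures via \cite[Theorem 2.1]{ps} --- which is precisely the criterion the paper itself records at the end of its Fr\'echet subsection.
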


\begin{proof}
The first statement is Proposition \ref{thm_topo}, while  the second is Proposition \ref{thm:bl}. The third follows from the fact that the entropy map is upper semi-continuous and Proposition \ref{lem_eq}. The fact that the pressure in nowhere Fr\'echet differentiable was proved Walters in \cite[Corollary 9]{Wal92}. Finally, the fourth statement follows from the the fifth together with Proposition \ref{prop:gf}.
\end{proof}

\section{Thermodynamic formalism  on non-compact spaces} \label{sec:tnc}

While thermodynamic formalism is well developed for continuous maps defined on compact metric spaces, the situation is far less satisfactory if the compactness assumption is dropped. Indeed, if $T:X \to X$ is a continuous map and $X$  a non-compact metric space even the definition of pressure is a subtle matter. For example, the definition given in Section~\ref{ssec:def} based on the notion of $(n,\epsilon)$-separated sets depends upon the metric. That is, two different metrics generating the same topology can yield different values of the pressure. Explicit examples of this can be found in  \cite[Remark (15) p.171]{Wal82} or \cite[p.254]{hk}. If the notion of pressure is to satisfy the variational principle, $P(\phi)=\sup \left\{h(\mu) + \int \phi \ d \mu : \mu \in \M_T\right\}$ (Proposition \ref{prop:pre} item \eqref{vp}), then its value can only depend on the Borel structure on $X$ and not on the metric. Indeed, both the entropy of an invariant measure, the continuous functions and their integrals only depend on the Borel structure.

Other approaches, also based on the definition of pressure for a compact metric space, have been suggested (see, for example, \cite{gs}). The \emph{interior pressure} of $T$ at the continuous function $\phi$ is defined by
\begin{equation*}
P^{\text{int}}(\phi)= \sup \left\{P_{\Lambda}(\phi) : \emptyset \neq \Lambda \subset X \text{ compact and }T-\text{invariant}			\right\},
\end{equation*}
where $P_{\Lambda}(\phi)$ denotes the pressure of $\phi$ restricted to the set $\Lambda$. We assume that $\sup \emptyset =- \infty$.  This definition may work well for dynamical systems having a considerable number of compact invariant subsets. However, for systems lacking such sets the interior pressure may not satisfy the variational principle. The following example has been suggested several times (see see \cite{hk, gs}). Let $T:\hat{X} \to \hat{X}$ be a minimal system of positive entropy defined over the compact metric space $\hat{X}$. Let $X$ be  $\hat{X}$ minus an orbit.
Then, $T:X \to X$ is a continuous map defined over the non-compact metric space $X$ that has no $T-$invariant compact subsets. In particular, $P^{\text{int}}(0) < \sup \left\{h(\mu) : \mu \in \M_T\right\}= h_{top}(T)$. Examples of this type can be constructed in manifolds of any dimension (see \cite{bcl, Ree81}). Another drawback of the interior pressure is that, since the space of continuous functions $C(X)$ may not be separable, we cannot directly apply the results of the previous sections to describe its differentiable properties.

A different approach is to suppose that the set $X$ can be continuously embedded in a compact metric space $(\hat{X}, \hat{d})$ and that the continuous function $\phi: X \to \R$ can be continuously extended to $\hat{X}$ (we also denote by $\phi$ the extension). In this case the \emph{exterior pressure} is defined by
\begin{equation} \label{def:pext}
P^{\text{ext}}(\phi)= \inf \left\{P_{(\hat{X}, \hat d)}(\phi) :  (\hat{X}, \hat d)   \right\},
\end{equation}
where the infimum is taken over all possible embeddings  $(\hat{X}, \hat d)$.  One drawback of this approach is that the functions $\phi$ need to be bounded, so for systems with infinite topological entropy the pressure will always be infinite.

Walters \cite[Theorem 8]{Wal78} exploited this idea to extend the Ruelle Perron Frobenius theorem to some dynamical systems defined on non-compact spaces. With this approach he was able to study interval maps with countably many branches (the so called  $f$-maps) and a class of countable Markov shifts. He constructed equilibrium measures for a large class of function. Moreover,  he constructed functions with two equilibrium measures, recovering results by Hofbauer \cite{h}.

Bowen \cite{bow1} gave a definition of entropy for a system defined on a non-compact set that is contained in a compact metric space. Pesin and Pitskel' \cite{PesPit} further developed this approached and proposed a definition of pressure in the same setting. Examples with interesting properties can be constructed in this context. 

\begin{example}
We begin with an  example of dynamical system defined in non-closed set for which its entropy is positive and strictly smaller than that of its compactification. Let $T:X \to X$ be a continuous map defined on a metric space $X$ and $\mu$  a $T-$invariant probability measure. A point $x \in X$ is \emph{generic}   for $\mu$ if the sequence of empirical measures 
\begin{equation*}
\delta_{x,n}:= \frac{1}{n} \sum_{i=0}^{n-1} \delta_{T^i x}
\end{equation*}
converges in the weak* topology to $\mu$. Denote by $G(\mu)$ the set of generic points for $\mu$. Bowen \cite[Theorem 3]{bow1} showed that 
if $\mu$ is ergodic then $h(T|G(\mu))= h(\mu)$. Let $\sigma: \Sigma \to \Sigma$ be the full-shift on two symbols and $\mu$ a Gibbs measure different from the (unique) measure of maximal entropy. Then, the system $\sigma: G(\mu) \to G(\mu)$ has topological entropy
$h(\mu)< \log 2$ and it is topologically transitive. Since the measure $\mu$ gives positive mass to any open set in $\Sigma$ we have that  the closure of $G(\mu)$ is $\Sigma$. That is, its completion with respect to any metric compatible with the cylinder topology  has entropy strictly larger than the original system. In particular, the boundary supports the $(1/2, 1/2)$-Bernoulli measure. 
\end{example}

\begin{example} The following are examples of systems not satisfying the variational principle. Let $\sigma : \Sigma \to \Sigma$ be the full-shift on two symbols and $g:\Sigma \to \R$ a continuous function. The \emph{irregular set for the Birkhoff averages of $g$}  is defined by
\begin{equation*}
\mathbb{B}(g):=\left\{	x \in \Sigma : \lim_{n \to \infty} \frac{1}{n} \sum_{i=0}^{n-1} g(\sigma^ix)  \text{ does not exist}	\right\}.
\end{equation*}
The set $\mathbb{B}(g)$ is invariant but, in general, not compact. It does not support any $\sigma-$invariant measure.
Barreira and Schmeling \cite[Theorem 2.1]{bs} showed that if $g$ is not cohomologous to a constant then $h(\sigma | \mathbb{B}(g))= h_{top}(\sigma)= \log 2$. Moreover, the set  $\mathbb{B}(g)$ is dense in $\Sigma$. Therefore, the system $\sigma: \mathbb{B}(g) \to \mathbb{B}(g)$
does not satisfy the variational principle. Actually, even a smaller set has the same property. The following example appears in \cite[Proposition A.2.1]{pe}. Let
\begin{equation*}
Z:=  \left\{x \in \Sigma :  x \notin G(\mu) \text{ for any } \mu \in \M_{\sigma} \right\}.
\end{equation*}
The system $\sigma: Z \to Z$ has entropy equal to $\log 2$ and 
\begin{equation*}
0= \sup \left\{ h(\mu) : \mu \in \M_{(\sigma | Z)} \right\} < h(\sigma | Z)= \log 2.
\end{equation*}
We note that Thompson \cite{Tho11} proposed a definition of pressure in the same setting as \cite{bow1, PesPit} with a suitable variational principle.  In general this notion is larger than the definition of Pesin and Pitskel.
\end{example}

A problem with the definitions of Bowen and Pesin and Pitskel (and also with that of Thompson)  is that we need a compact reference space and there are plenty of natural examples for which such a space is not available.

Finally, we consider yet another definition of pressure in the case that $T:X \to X$ is a continuous map defined on a non-compact space.  The \emph{variational pressure} of the continuous function $\phi:X \to \R$ is defined by
\begin{equation*}
P_{var}(\phi)= \sup \left\{h(\mu) + \int \phi \ d \mu : \mu \in \M_T	\text{ such that } \int 	\phi \ d \mu > - \infty	\right\}.
\end{equation*}

\begin{proposition} \label{prop:var}
The variational pressure satisfies the following properties
\begin{enumerate}
\item If $c \in \R$ and $\phi \in C(X)$ then $P_{var}(\phi + c)= P_{var}(\phi) +c$.
\item If $\phi, \psi  \in C(X)$ satisfy $\phi(x) \leq \psi (x)$ then $P_{var}(\phi) \leq P_{var}(\psi)$.
\item If $\phi, \psi  \in C(X)$ satisfy $P(\phi)<\infty$, $P(\psi)<\infty$ and $\|\phi -\psi\| <\infty$ then $|P_{var}(\phi)- P_{var}(\psi)| \leq \| \phi - \psi\|$.
\item The function $P_{var}$ is convex.
\item If $X$ is a compact metric space then $P(\phi)=P_{var}(\phi)$.
\end{enumerate}
\end{proposition}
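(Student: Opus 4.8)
The plan is to establish the five properties in an order that lets the elementary ones power the rest, while keeping careful track of the integrability constraint $\int\phi\,d\mu>-\infty$, which is the only feature distinguishing $P_{var}$ from a naive supremum and the source of every subtlety here. For property (1) I would substitute $\phi+c$ into the definition: since $c$ is a finite constant, $\int(\phi+c)\,d\mu=\int\phi\,d\mu+c$, the constraint $\int(\phi+c)\,d\mu>-\infty$ is equivalent to $\int\phi\,d\mu>-\infty$, and pulling $c$ outside the supremum gives $P_{var}(\phi+c)=P_{var}(\phi)+c$. For property (2) the key point is that $\phi\le\psi$ forces $\int\phi\,d\mu\le\int\psi\,d\mu$ for every $\mu$, so the admissible set $\{\mu:\int\phi\,d\mu>-\infty\}$ is contained in $\{\mu:\int\psi\,d\mu>-\infty\}$; on this smaller set the quantity $h(\mu)+\int\phi\,d\mu$ is dominated term by term by $h(\mu)+\int\psi\,d\mu$, and taking suprema yields $P_{var}(\phi)\le P_{var}(\psi)$ (with the empty-admissible-set case giving $P_{var}(\phi)=-\infty$, for which the inequality is trivial).

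Property (3) I would deduce formally from (1) and (2). From the pointwise bound $\phi\le\psi+\|\phi-\psi\|$ one gets, whenever $\|\phi-\psi\|<\infty$, that $P_{var}(\phi)\le P_{var}(\psi+\|\phi-\psi\|)=P_{var}(\psi)+\|\phi-\psi\|$, and symmetrically; hence $|P_{var}(\phi)-P_{var}(\psi)|\le\|\phi-\psi\|$ wherever both values are finite. This same chain shows that finiteness propagates to every function within finite sup-distance, which is exactly why the qualifier ``when finite'' is the natural hypothesis. For property (5) I would invoke the variational principle, Proposition~\ref{prop:pre} item~\eqref{vp}: when $X$ is compact every $\phi\in C(X)$ is bounded, so $\int\phi\,d\mu$ is finite for each $\mu\in\M_T$ and the constraint $\int\phi\,d\mu>-\infty$ is vacuous; thus the supremum defining $P_{var}(\phi)$ coincides with the one in the variational principle and $P_{var}(\phi)=P(\phi)$.

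The main obstacle is property (4), convexity, precisely because the admissible set of measures genuinely depends on $\phi$. The guiding idea is that $P_{var}(\phi)=\sup_\mu\bigl(h(\mu)+\int\phi\,d\mu\bigr)$ is a pointwise supremum of maps $\phi\mapsto h(\mu)+\int\phi\,d\mu$ which are affine in $\phi$, and a supremum of affine functions is convex. The delicate part is the extended-real arithmetic. Writing $\phi_t=t\phi+(1-t)\psi$ with $t\in(0,1)$, I would first dispose of the case where $P_{var}(\phi)=+\infty$ or $P_{var}(\psi)=+\infty$, where the target inequality is trivial. When both are finite, the crucial observation is that no invariant measure can have $\int\phi\,d\mu=+\infty$ or $\int\psi\,d\mu=+\infty$ (such a measure is admissible and, since $h\ge 0$, would force the pressure to be $+\infty$); consequently any $\mu$ admissible for $\phi_t$ must in fact have both $\int\phi\,d\mu$ and $\int\psi\,d\mu$ finite, since an integral equal to $-\infty$ on either side would drag $\int\phi_t\,d\mu$ down to $-\infty$. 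With both integrals finite the honest identity $h(\mu)+\int\phi_t\,d\mu=t\bigl(h(\mu)+\int\phi\,d\mu\bigr)+(1-t)\bigl(h(\mu)+\int\psi\,d\mu\bigr)\le tP_{var}(\phi)+(1-t)P_{var}(\psi)$ holds, and taking the supremum over admissible $\mu$ gives convexity. Ruling out the indeterminate $-\infty+\infty$ configuration in this way is the only genuinely subtle step of the proof.
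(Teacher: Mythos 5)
Your proposal is correct and follows essentially the same route as the paper: items (1) and (2) directly from the definition, (3) via the translation and monotonicity properties exactly as in the paper's proof, (4) as a supremum of affine maps $\phi\mapsto h(\mu)+\int\phi\,d\mu$, and (5) from the variational principle (Proposition~\ref{prop:pre}\eqref{vp}). The only difference is one of detail, not of route: where the paper disposes of convexity in one line by citing \cite{r}, you explicitly verify the extended-real arithmetic forced by the constraint $\int\phi\,d\mu>-\infty$ (ruling out the $-\infty+\infty$ configuration), which is a careful unpacking of the same argument.
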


\begin{proof}
The first two claims are direct from the definition. For the third, by the first two properties we have
\begin{eqnarray*}
P_{var}(\psi) - \| \phi - \psi\| =P_{var}\left(P_{var}(\psi) - \| \phi - \psi\| \right) \leq P_{var}(\phi) \leq &\\ 
P_{var}\left(P_{var}(\psi) + \| \phi - \psi\| \right) =P_{var}(\psi) + \| \phi - \psi\|.
\end{eqnarray*}
That is $|P_{var}(\phi)- P_{var}(\psi)| \leq \| \phi - \psi\|$.

Given $\mu \in \M_T$  the map $\phi \mapsto \left( h(\mu) + \int \phi  \ d \mu \right)$ is affine.  Therefore, $P_{var}$ is convex being the  supremum of affine, hence convex, maps (see \cite[Theorem 5.5]{r}). The last claim is Proposition \ref{prop:pre}\eqref{vp}.
\end{proof}

\begin{remark}
If $T:X \to X$ is a continuous map and $\phi:X \to \R$ a continuous function then 
$P^{\text{int}}(\phi) \leq P_{var}(\phi) \leq  P^{\text{ext}}(\phi)$.
All the inequalities can be strict, (see \cite[Theorem 1.8]{hk} and the above discussion). 
\end{remark}

In the next sections we will be interested in the case in which the variational and the exterior pressure coincide. More precisely, we will construct metrics $d$ in $X$ such that the corresponding completion $\hat{X}$ is  compact and for every uniformly continuous function $\phi:X \to R$ we have  $P_{var}(\phi)= P_{(\hat{X}, d)}(\phi)$.  Denote by  $UC_d(X)$ the space of bounded uniformly continuous functions on $X$  with respect to the metric $d$.

\begin{theorem} 
Let $T:X \to X$ be a finite entropy map defined on the non-compact topological space $X$. Suppose $X$ that is densely embedded in a compact metric space $(\hat{X}, d)$ so that for every $\phi \in UC_d(X)$,
\begin{equation*}
P_{var}(\phi)= P_{(\hat{X}, d)}(\phi).
\end{equation*}
If $U \subset UC_d(X)$ is an open set then
\begin{enumerate}
\item the pressure $P_{var}: U \to \R$ is Gateaux differentiable in a $G_{\delta}$ subset of $U$;
\item the set of points at which the pressure $P_{var}:U \to \R$ is not Gateaux differentiable is an Aronszajn  null set.
 \end{enumerate}
 \label{thm:ncpct_general}
\end{theorem}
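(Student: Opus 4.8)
The plan is to reduce Theorem \ref{thm:ncpct_general} to the compact results already established in Section \ref{sec:tc}, namely Proposition \ref{thm_topo} and Proposition \ref{thm:bl}. The key observation is that the hypothesis gives us an \emph{isometric identification at the level of values}: on the space $UC_d(X)$ of bounded uniformly continuous functions, every $\phi$ extends uniquely (by uniform continuity) to a continuous function $\bar\phi$ on the compact completion $(\hat X, d)$, and the hypothesis says $P_{var}(\phi) = P_{(\hat X,d)}(\bar\phi)$. So the first step is to make precise the natural linear map $\iota : UC_d(X) \to C(\hat X)$ sending $\phi$ to its unique continuous extension $\bar\phi$. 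I would check that $\iota$ is a linear isometry onto a closed subspace of $C(\hat X)$: it is injective because $X$ is dense in $\hat X$, it preserves the supremum norm because $\sup_X |\phi| = \sup_{\hat X}|\bar\phi|$ by density, and its image is exactly $C(\hat X)$ (every continuous function on the compact $\hat X$ restricts to a bounded uniformly continuous function on the dense subset $X$). Thus $\iota$ is in fact a surjective linear isometry, i.e.\ $UC_d(X)$ and $C(\hat X)$ are isometrically isomorphic Banach spaces.

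Once this identification is in place, the second step is purely formal. Under $\iota$, the functional $P_{var}$ on $UC_d(X)$ corresponds exactly to the pressure $P_{(\hat X,d)}$ on $C(\hat X)$, by the hypothesis $P_{var}(\phi) = P_{(\hat X,d)}(\iota\phi)$. Gateaux differentiability is preserved under a surjective linear isometry: for $\phi, \psi \in UC_d(X)$,
\begin{equation*}
\frac{P_{var}(\phi + t\psi) - P_{var}(\phi)}{t} = \frac{P_{(\hat X,d)}(\iota\phi + t\,\iota\psi) - P_{(\hat X,d)}(\iota\phi)}{t},
\end{equation*}
so the difference quotient defining the Gateaux derivative of $P_{var}$ at $\phi$ in direction $\psi$ exists if and only if the corresponding quotient for $P_{(\hat X,d)}$ at $\iota\phi$ in direction $\iota\psi$ exists. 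Hence $P_{var}$ is Gateaux differentiable at $\phi$ if and only if $P_{(\hat X,d)}$ is Gateaux differentiable at $\iota\phi$, and $\iota$ maps the differentiability set of the former bijectively onto that of the latter. An open set $U \subset UC_d(X)$ corresponds to an open set $\iota(U) \subset C(\hat X)$ since $\iota$ is a homeomorphism.

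For part (1), apply Proposition \ref{thm_topo} to $T:\hat X \to \hat X$ (a continuous map of finite entropy, since $P_{(\hat X,d)}(0) = P_{var}(0) = h_{top}(T) < \infty$ by hypothesis) on the open set $\iota(U)$: the differentiability set of $P_{(\hat X,d)}$ is residual, hence contains a dense $G_\delta$. A homeomorphism carries $G_\delta$ sets to $G_\delta$ sets, so pulling back through $\iota^{-1}$ gives a $G_\delta$ set of $U$ on which $P_{var}$ is Gateaux differentiable. For part (2), apply Proposition \ref{thm:bl} to conclude that the non-differentiability set of $P_{(\hat X,d)}$ in $\iota(U)$ is Aronszajn null in $C(\hat X)$; since $\iota$ is a surjective linear isometry and the class of Aronszajn null sets is defined purely in terms of the linear and Borel structure of the Banach space, $\iota^{-1}$ carries Aronszajn null sets to Aronszajn null sets, so the non-differentiability set of $P_{var}$ in $U$ is Aronszajn null.

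The one point requiring genuine care, and the step I would flag as the main technical obstacle, is verifying that a surjective linear isometry preserves the Aronszajn null class. This is where one must use the fact (noted in the excerpt, via Cs\"ornyei's theorem and the definition of Aronszajn null) that the null class depends only on the Banach-space structure: the families $\mathcal A(a)$ and $\mathcal A((a_n))$ are defined via lines in direction $a$ and one-dimensional Lebesgue measure, all of which transform correctly under a linear isometry (a line maps to a line, and the isometry's restriction to it is an affine isometry of $\R$ preserving Lebesgue-null sets), and sequences with dense linear span map to sequences with dense linear span. I would spell out that $\iota$ sends $\mathcal A((a_n))$ into $\mathcal A((\iota a_n))$ and that taking the intersection over all dense-span sequences is thus preserved, so the intersection defining Aronszajn null is carried over faithfully. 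Everything else is routine once the isometry $\iota$ is established.
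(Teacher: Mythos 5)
Your proposal is correct and takes essentially the same approach as the paper, whose entire proof is the single sentence that the theorem is ``an immediate consequence of Proposition~\ref{thm_topo} and Proposition~\ref{thm:bl}''; your isometric identification $\iota\colon UC_d(X)\to C(\hat X)$ and the transfer of $G_\delta$ sets and Aronszajn null sets under it are exactly the routine details the paper leaves implicit. Your verification that the Aronszajn null class is preserved under a surjective linear isometry is accurate and fills in the one step the paper tacitly assumes.
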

The proof of this result is an immediate consequence of Proposition  \ref{thm_topo} and Proposition \ref{thm:bl}.

\section{Countable Markov shifts: preliminaries and results}
\label{sec:CMS}
In this section we consider the particular case in which the dynamical system defined over a non-compact space is a \emph{countable Markov shift (CMS)} $(\Sigma, \sigma)$. Let $A=(a_{i,j})_{i, j}$ be an $\N\times \N$ transition matrix with entries in $\{0, 1\}$ and let
$$\Sigma=\left\{ \underline{x}=(x_0, x_1, \ldots)\in \N^{\N_0}:a_{x_n, x_{n+1}}=1\right\}.$$
Thus the system is defined by a directed graph structure on $\N$.  When we want to emphasise that $\N$ is being used just as a countable set of vertices, we may denote it by $V$.  On the other hand, often it will be useful to use the implicit indexing of these vertices which $\N$ brings.  Let $\sigma:\Sigma \to \Sigma$ be the left shift, we will always assume that  $(\Sigma, \sigma)$ is topologically mixing (the definition is the same as the one we gave in Section~\ref{ssec:SFT}).  We consider $\Sigma$ endowed with the topology generated by the cylinder sets $\{Z: Z\in \mathcal{Z}_n \text{ for some } n\in \N\}$ where the $n$-cylinder $Z\in \mathcal{Z}_n$ containing $x\in \Sigma$ is of the form $Z=\{(y_0, y_1, \ldots)\in \Sigma: y_i=x_i \text{ for } i=0, \ldots n-1\}$.  In general, $\Sigma$ is a non-compact space. Moreover, it is locally compact if and only if the row sum of the transition matrix $A$ is always finite. For CMS we will adopt the variational definition of pressure. Let $\phi:\Sigma \to \R$ be a continuous function, the pressure of $\sigma$ at $\phi$ is defined by
\begin{equation*}
P_{\Sigma}(\phi):=P_{var}(\phi)= \sup \left\{h(\mu) + \int \phi \ d \mu :  \mu \in \M_{\sigma}	\text{ such that } \int 	\phi \ d \mu > - \infty	 \right\}.\end{equation*}
We include the subscript $\Sigma$ to emphasise the space in which the dynamics is defined.

\begin{remark}
In a series of articles starting in 1999, Sarig developed a theory of thermodynamic formalism for general topologically mixing CMS. Similar results, for a narrower class of systems, were obtained earlier by Mauldin and Urba\'nski \cite{mu}. Both, Sarig  and Mauldin-Urba\'nski considered regular continuous functions. Indeed, for functions of summable variations (see \cite[p.556]{s2} for precise statements) Sarig defined a notion of pressure, the so-called \emph{Gurevich pressure}. This notion satisfies the variational principle and is well suited for  the use of all the transfer operator machinery. In particular, he proved that if $\phi: \Sigma \to \R$ is of summable variations then  $P^{\text{int}}(\phi) =P_{var}(\phi)$, see \cite[Theorem 2]{Sar99} and \cite[p.557]{s2}. This identity, for $\phi=0$, was obtained earlier by Gurevich \cite{gu1}. We stress that we do not assume the summable variation condition on the function $\phi$ in the definition of $P_{\Sigma}(\phi)$.
\end{remark}

\begin{remark} \label{rem:fe}
Recall that the entropy of $(\Sigma, \sigma)$ is defined by $h_{top}(\sigma)=P_{\Sigma}(0)$.  If there exists $N \in \N$ such that $(\Sigma, \sigma)$ has infinitely many periodic orbits of period $N$ then $h_{top}(\sigma)= \infty$. This directly follows from the definition of Gurevich entropy, see \cite{gu1, gu2, Sar99}.  
\end{remark}

\subsection{Metrics, compactifications and differentiability of the pressure}
In Theorem \ref{thm:ncpct_general}  a strategy to extend the results on differentiability of the pressure obtained in the compact setting to dynamical systems defined on non-compact spaces, is proposed. We now discuss its implementation in the context of CMS, $(\Sigma, \sigma)$. The idea is to consider a metric $d$ on $\Sigma$, generating the cylinder topology, such that its completion $\bar\Sigma$ is compact. The main property we require from the completion is that if $\phi \in UC_d$  then $P_\Sigma(\phi)=P_{\bar\Sigma}(\phi)$. Note that, abusing notation, we denote the (unique) extension of $\phi$ to the completion of $X$ also by $\phi$.

We first consider a class of metrics in $\Sigma$, maintaining  the notation used in \cite{gs,Zar86}.  This will be of the form: for $\theta\in (0, 1)$ and $\underline x=(x_0, x_1, \ldots), \ \underline  y= (y_0, y_1, \ldots)\in \Sigma$, 
\begin{equation}
d(\underline x,\underline  y)=d_{\rho, \theta}( \underline x, \underline  y)=\sum_{n\ge 0}\theta^n\rho(x_n, y_n)
\label{eq:metric}
\end{equation}
with $\rho: V \times V \to [0, 1]$ a metric on $V$. 
In this setup, $d_{\rho, \theta}$ generates the cylinder topology and $\sigma$ is uniformly continuous.  In particular $\sigma$ extends to the completion of $\Sigma$, which we denote by $\bar\Sigma_\rho$,
and $(\bar\Sigma_\rho, \sigma)$ is a continuous dynamical system. We denote by $\bar d$ the metric on  $\bar\Sigma_\rho$.  It will be important for us that this process yields a compact space.

We say $\rho$ is of \emph{vanishing type} if 
$$\lim_{n\to \infty}\sup_{i, j\ge n}\rho(i, j)=0,$$
and \emph{non-vanishing type} if not (observe that this condition is independent of the choice of enumeration of the vertices).  In \cite{gs}, vanishing type metrics were referred to as type 2 metrics, with type 1 metrics defined by $\inf_{i\neq j}\rho(i, j)>0.$  We will not be interested in type 1 metrics here since in the CMS setting they are not totally bounded and so the completion of $\Sigma$ in such a metric is not compact.

 To ensure compactness of the completion it is sufficient that $\rho$ be totally bounded, see below.
Given $\rho$ on $V$, we will always assume $d$ is given by \eqref{eq:metric}.

\begin{remark} The following observations will be important.
\begin{enumerate}
\item
If $\rho$ is totally bounded then the induced metric $d$ on $\Sigma$ as in \eqref{eq:metric} is totally bounded.  This can be seen by supposing $\eps>0$ and finding a finite cover of $V$ by $V_1, \ldots, V_k$ of diameter less than $\epsilon$.  Let $n\in \N$ have $\theta^n<\eps$.  Then the induced cylinders $[V_{i_1}, \ldots, V_{i_n}]\cap \Sigma$ for $V_{i_j}\in \{V_1, \ldots, V_k\}$ cover $\Sigma$.  These have cardinality at most $k^n$.  Therefore total boundedness of $\rho$ implies $(\bar\Sigma_\rho, \bar d_{ \rho, \theta})$ is compact.

\item Any metric of vanishing type is totally bounded.  A simple example of a non-vanishing type metric which is not totally bounded is: for $a, b\in \N$ let $\rho(a, b)=0$ if $a=b$ and $\rho(a, b)=1$ if $a\neq b$.  Given $d$ as in  \eqref{eq:metric}, the completion $\bar\Sigma_\rho$ is not compact.
\item
If $\rho$ is totally bounded then $\phi:\Sigma \to \R$ is uniformly continuous in $d$ if and only if it extends to a  function $\bar\phi:\bar\Sigma _{\rho, \theta}\to \R$ which is continuous in $\bar d$. 
\end{enumerate}
\end{remark}

 Let $\N_\rho$ denote the completion of $\N$ with respect to $\rho$ and $\bd \N_\rho=\N_\rho\sm \N$.  We denote the boundary of $\Sigma$ by $\bd \Sigma=\bd \Sigma_\rho$.  Sometimes it will be more convenient to emphasise that $\N$ is the set of vertices for our shift and denote these by $V$, using the notation for boundaries  as above.

This metric compactification approach to study CMS began with the work of Gurevich in 1970 \cite{gu2} and Walters in 1978 \cite[Section 3.3]{Wal78}. It was further developed by Zargaryan \cite{Zar86}, who considered  $\rho(a, b)=\left|\frac1a-\frac1b\right|$ if $a, b\in \N$ and $a\neq b$ and $\rho(a, a)=0$, thus $\rho$ is totally bounded. The main result in \cite{Zar86} is that for the corresponding metric $d$ in $\Sigma$ and $\phi:\Sigma \to \R$ uniformly continuous, we have
$P_{\bar\Sigma}(\phi)= P^{\text{int}}(\phi)$.  As pointed out by Gurevich and Savchenko \cite[\textsection 1]{gs}, the precise form of the metric is not important for this result, so long as it is of vanishing type.  The completion with respect to such a metric can be understood as a particular shift on the one point compactification of the alphabet (usefully thought of as $\{\ldots, \frac13, \frac12, 1\}$): namely $\bar\N_\rho=\N\cup\{\infty\}$.   As we will show later, $(\bar\Sigma, \sigma)$ is often not Markov (see Sections~\ref{ssec:comp} and \ref{ssec:nsec}), and the transitions to and from $\bd \Sigma$ depend on the structure of $\Sigma$ not the specific type of $\rho$ (see Section~\ref{ssec:struc}).

For many systems, particularly locally compact ones, metrics $d_{\rho, \theta}$ with $\rho$ of vanishing type are very natural, indeed the one point compactification of $\Sigma$ coincides with $\bar\Sigma$.  However, there are many cases where $\rho$ being non-vanishing, which enriches the boundary $\bd\Sigma$ and enlarges $UC_d(\Sigma)$, is natural.  The following result always holds for these metrics.

\begin{lemma}
If $\phi\in UC_{d_{\theta, \rho}}(\Sigma)$ then $P^{\rm{int}}(\phi)=P_{\rm{var}}(\phi)$.
\end{lemma}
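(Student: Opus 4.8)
The plan is to prove the equality by squeezing both pressures between their values on a sequence of finite-coordinate approximations of $\phi$, for which the identity $P^{\text{int}}=P_{var}$ is already known. The starting observation is that uniform continuity of $\phi$ with respect to $d_{\theta,\rho}$ forces its variations to decay. Indeed, if $\underline x$ and $\underline y$ agree on their first $n$ coordinates then, by the form \eqref{eq:metric} of the metric, $d_{\theta,\rho}(\underline x,\underline y)=\sum_{k\ge n}\theta^k\rho(x_k,y_k)\le \theta^n/(1-\theta)$, which tends to $0$ uniformly in the pair. Writing $V_n(\phi)=\sup\{|\phi(\underline x)-\phi(\underline y)|: x_i=y_i \text{ for } 0\le i\le n-1\}$, uniform continuity therefore gives $V_n(\phi)\to 0$.

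First I would build the approximants. For each $n$, pick a point $\hat x_C$ in every $n$-cylinder $C$ and let $\phi_n$ be constant on each such $C$ with value $\phi(\hat x_C)$. Then $\phi_n$ depends only on the coordinates $x_0,\ldots,x_{n-1}$, so it is continuous and bounded, and of summable variations since $V_k(\phi_n)=0$ for $k\ge n$; moreover $\|\phi-\phi_n\|\le V_n(\phi)\to 0$. Because $\phi_n$ has summable variations and $(\Sigma,\sigma)$ is topologically mixing, the result of Sarig recalled above (\cite[Theorem 2]{Sar99}) applies and yields $P^{\text{int}}(\phi_n)=P_{var}(\phi_n)$ for every $n$.

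Next I would pass to the limit, for which both functionals must be Lipschitz in the supremum norm. For $P_{var}$ this is Proposition \ref{prop:var}(3). For $P^{\text{int}}$, each restricted pressure $P_\Lambda$ on a compact invariant $\Lambda$ is $1$-Lipschitz by Proposition \ref{prop:pre}(3), and the elementary bound $|\sup_\Lambda a_\Lambda-\sup_\Lambda b_\Lambda|\le \sup_\Lambda|a_\Lambda-b_\Lambda|$ shows that the supremum $P^{\text{int}}$ is $1$-Lipschitz as well, provided it is finite. Finiteness holds since $\phi$ is bounded and the entropy is finite, giving $P^{\text{int}}(\phi)\le P_{var}(\phi)\le \htop(\sigma)+\|\phi\|<\infty$. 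Combining the three facts,
\begin{equation*}
P^{\text{int}}(\phi)=\lim_{n\to\infty} P^{\text{int}}(\phi_n)=\lim_{n\to\infty} P_{var}(\phi_n)=P_{var}(\phi),
\end{equation*}
which is the claim.

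The step to watch is the invocation of Sarig's theorem for the approximants: I would want to confirm that locally constant potentials meet its hypotheses (topological mixing is assumed throughout, while boundedness and summable variations are immediate from the construction), and that the finiteness needed to run the Lipschitz limit is in force, which is exactly where the standing finite-entropy assumption enters. The decay $V_n(\phi)\to0$ is the only place the specific shape \eqref{eq:metric} of the metric is used, and it is precisely what makes uniform continuity strong enough to guarantee approximation by finite-coordinate potentials; I expect this to be the crux, everything else being routine.
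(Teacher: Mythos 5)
Your proof is correct and follows essentially the same route as the paper's: both arguments approximate $\phi$ in the supremum norm by locally constant (hence summable-variation) potentials, using that $n$-cylinders have diameter at most $\theta^n/(1-\theta)$ under \eqref{eq:metric}, invoke \cite[Theorem 2]{Sar99} for these approximants, and conclude via the $1$-Lipschitz continuity of both $P^{\text{int}}$ and $P_{var}$ in the supremum norm. Your additional care about finiteness of $P^{\text{int}}$ (via the standing finite-entropy assumption) merely makes explicit what the paper leaves implicit.
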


\begin{proof}
This follows from the equivalent result for potentials of summable variation in \cite[Theorem 2]{Sar99} (in the vanishing case we could use \cite[Theorem 1.5]{gs}).  As in the proof of Proposition~\ref{prop:var}, if $\phi, \psi:\Sigma\to \R$ are continuous then $\|\phi-\psi\|\le \eps$ implies $|P^{\text{int}}(\phi)- P^{\text{int}}(\psi)|, |P_{\text{var}}(\phi)- P_{\text{var}}(\psi)|<\eps$, so it is sufficient to show that potentials of summable variation are dense in $UC_{d_{\theta, \rho}}(\Sigma)$.  Recall that the $n$-th variation of $\phi:\Sigma\to \R$ is $V_n(\phi):=\sup_{Z\in \mathcal{Z}_n}\sup_{\underline x, \underline y\in Z}|\phi(\underline x)- \phi(\underline y)|$, and the potential has summable variations if $\sum_{n\ge 1}V_n(\phi)<\infty$.

Suppose $\phi\in UC_{d_{\theta, \rho}}(\Sigma)$ and let $\eps>0$.  By uniform continuity there exists $\delta>0$ such that $d(\underline x, \underline y)<\delta$ implies $|\phi(\underline x)- \phi(\underline y)|<\eps$.  Choose $n\in \N_0$ such that $\theta^n/(1-\theta)<\delta$ and observe that from \eqref{eq:metric}, any $Z\in \mathcal{Z}_n$ has $\diam Z<\delta$.  So we can pick an arbitrary point $\underline x_{Z}\in Z$ and define, for $\underline x\in Z$, the function $\tilde\phi:\Sigma\to \R$ by $\tilde \phi(\underline x)= \phi(\underline x_{Z})$.  Then $\tilde \phi$ is of summable variations (indeed it is locally constant) and $\|\phi-\tilde\phi\|<\eps$, as required.  
\end{proof}

We next focus on a particular kind of metric structure, where the vertices $V=\N$ can be `sectorially arranged' with respect to $\rho$, see Section~\ref{sec:sectors} for a definition.  The main result here is Theorem~\ref{thm:CMS_main}.
We will show that vanishing type metrics are sectorially arranged and hence, since there are also non-vanishing examples which are sectorially arranged, this theorem is a generalisation of \cite{Zar86}.  

Functions $\phi \in UC_d(\Sigma)$  extend to continuous functions on $\bar\Sigma$ and are therefore bounded.  Since we are interested in pressure, it therefore only makes sense in this context to look at systems with finite topological entropy.
We are able to state the following, which is a corollary of Proposition~\ref{thm:ncpct_general} and Theorem~\ref{thm:CMS_main}.

\begin{corollary}  \label{cor:CMS gat}
Let $(\Sigma ,\sigma)$ be a finite entropy CMS, $\rho$ be totally bounded and $d$ the corresponding metric in $\Sigma$. If  $V$ is sectorially arranged  and $U \subset UC_d(\Sigma)$ is an open set then
\begin{enumerate}
\item the pressure $P_\Sigma:U \to \R$ is Gateaux differentiable in a $G_{\delta}$ set of $U$;
\item the set of points at which the pressure $P_\Sigma:U \to \R$ is not Gateaux differentiable is an Aronszajn null set.
 \end{enumerate}
\end{corollary}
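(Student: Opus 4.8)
The plan is to recognise this corollary as a direct application of Proposition~\ref{thm:ncpct_general}, taking $X=\Sigma$, $T=\sigma$, and for the ambient compact metric space $(\hat X, d)$ the completion $(\bar\Sigma, \bar d)$ of $\Sigma$ under the metric $d=d_{\rho,\theta}$ of \eqref{eq:metric}. Concretely, I would first check that all the structural hypotheses of that proposition are met, then feed in Theorem~\ref{thm:CMS_main} to supply the one genuinely dynamical hypothesis, namely the equality of the two pressures on uniformly continuous potentials, and finally read off the two conclusions verbatim.

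For the structural hypotheses, recall that $P_\Sigma$ is by definition the variational pressure $P_{var}$, so the functionals appearing in the corollary and in Proposition~\ref{thm:ncpct_general} coincide. Since $\rho$ is totally bounded, the first observation in the Remark following \eqref{eq:metric} shows that the induced metric $d$ is totally bounded, hence the completion $\bar\Sigma$ is compact; the inclusion $\Sigma\hookrightarrow\bar\Sigma$ is a continuous embedding because $d$ generates the cylinder topology and is the restriction of $\bar d$, and $\sigma$ extends to a continuous map on $\bar\Sigma$ since it is uniformly continuous in $d$. The CMS is assumed to have finite entropy, matching the finite-entropy requirement on $T$. Finally, by the third observation in the same Remark, any $\phi\in UC_d(\Sigma)$ extends to a continuous $\bar\phi$ on $\bar\Sigma$, so the quantity $P_{\bar\Sigma}(\phi)=P_{(\bar\Sigma,\bar d)}(\bar\phi)$ is a well-defined pressure of a continuous function on a compact system, exactly the object that Proposition~\ref{thm:ncpct_general} requires.

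It remains to supply the pressure-equality hypothesis. Here I would invoke Theorem~\ref{thm:CMS_main}: since $\rho$ is totally bounded and $V$ is sectorially arranged, for every $\phi\in UC_d(\Sigma)$ we have $P_\Sigma(\phi)=P_{\bar\Sigma}(\phi)$, which is precisely the condition $P_{var}(\phi)=P_{(\hat X,d)}(\phi)$ of Proposition~\ref{thm:ncpct_general}. With every hypothesis verified, I would apply that proposition to the open set $U\subset UC_d(\Sigma)$: its item (1) gives that the set on which $P_\Sigma\colon U\to\R$ is Gateaux differentiable is a $G_\delta$ set, and its item (2) gives that the set on which $P_\Sigma$ fails to be Gateaux differentiable is Aronszajn null. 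These are exactly the two assertions of the corollary.

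The argument presents no serious obstacle: all the analytic content sits inside Proposition~\ref{thm:ncpct_general} (which packages Mazur's theorem and Aronszajn's extension of Rademacher's theorem, via Proposition~\ref{thm_topo} and Proposition~\ref{thm:bl}) and, on the dynamical side, inside Theorem~\ref{thm:CMS_main}. The only care required is in matching identifications---confirming that the completion is genuinely compact, that the embedding and the extension of $\phi$ behave as demanded, and that the two pressures being equated are literally the same functionals appearing in the general proposition---so the proof is essentially bookkeeping once those two results are granted.
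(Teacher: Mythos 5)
Your proposal is correct and follows exactly the paper's route: the corollary is stated there as an immediate consequence of Proposition~\ref{thm:ncpct_general} (applied with $\hat X=\bar\Sigma$) combined with Theorem~\ref{thm:CMS_main}, which is precisely your argument. Your additional verification of the structural hypotheses (compactness of the completion via total boundedness of $\rho$, the continuous extension of $\phi\in UC_d(\Sigma)$, and the identification $P_\Sigma=P_{var}$) is accurate bookkeeping that the paper leaves implicit.
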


The properties established in the following definition trivially imply the conclusion of Theorem~\ref{thm:CMS_main} and Corollary~\ref{cor:CMS gat}.

\begin{definition}
Let $(\Sigma ,\sigma)$ be a finite entropy CMS. We say that it is \emph{interior rich} for a totally bounded $\rho$ if for any $\mu\in \M_{\bd\Sigma_\rho} , \phi\in C(\bar\Sigma, \R)$ and $\eps>0$ there is $\mu'\in \M_{\Sigma}$ such that
$$h(\mu')> h(\mu)-\eps \text{ and } \int\phi~d\mu'> \int\phi~d\mu-\eps.$$
\end{definition}

In fact, we will show that sectorially arranged examples are interior rich (see Remark \ref{rem:ir}), but will also give examples showing that the latter class is larger than the former (see Section \ref{ssec:nsec}).

\subsection{Preliminary structure} \label{ssec:struc}

In this subsection we give some idea of the structure of $\bd \Sigma$ which does not depend strongly on the form of $\rho$. It will be more convenient to write $\N$ rather than $V$ for our vertices.   We first show that $\bd\Sigma$ inherits some structure from $\Sigma$.

\begin{lemma}
We can write points in $\bd\Sigma$ as sequences $(x_0, x_1, \ldots)$ where $x_i\in \N\cup\bd\N_\rho$.
\end{lemma}

\begin{proof}
Suppose $x\in \bd\Sigma$ and that $(y^k)_k, (z_k)^k$ are Cauchy sequences in $\Sigma$ converging to $x$.  Suppose that $I$, a bounded subset of $\N_0$, and $\eps>0$ are such that for all large $k$, $d(y_i^k, \bd\N_\rho)>\eps$ whenever $i\in I$.  Then the same must be true of the $z_i^k$ for large $k$.  Hence $x$ can be represented at indices $I$ by the corresponding entry $y_i^k$.  On the other hand, if $I'$, a bounded subset of $\N_0$, is such that $d(y_i^k, \bd\N_\rho)\to 0$ as $k\to \infty$ for $i\in  I'$ then the same must be true for $(z_i^k)_k$, and the corresponding entries for $x$ can be taken in $\bd\N_\rho$.
\end{proof}

We write $i \to j$ for $i, j\in \N_\rho$ if there is a sequence $x=(x_0, x_1, \ldots)\in \bar\Sigma$ with $x_0=i$ and $x_1=j$.  Given two sets of vertices $A$ and $B$ we write $A\to B$ if there are $i\in A, j\in B$ such that $i\to j$.  We say that a set of vertices is \emph{connected} if for each two vertices in the set there is a directed path between them (this may only be in one direction). If $\rho$ is of vanishing type then $\bd\N_\rho$ is a single point which we denote by $\{\infty\}$, but in general $\bd\N_\rho$ may not even be countable.  The following lemma deals with a similar  setting to \cite[Proposition 5.1]{gs}.  Observe that nothing is assumed on the metric $\rho$.

\begin{lemma} Let $(\Sigma ,\sigma)$ be a finite entropy topologically mixing  CMS.
\begin{enumerate}
\item  
There is no element of the form $(x_0, x_1, \ldots)\in \bar\Sigma$ with $x_n=i\in \N$,  the points $x_{n+1}, \ldots,  
x_{n+m} \in \bd\N_\rho$ and $x_{n+m+1}=j\in \N$.
\item 
$\bd\bar\Sigma \to \bd\bar\Sigma$ is always allowed.
\item 
Any ergodic measure $\mu$ on $(\bar\Sigma,  \bar\sigma)$ for which $\mu(\Sigma)<1$ has $\mu(\bd\Sigma)= 1$.
 \end{enumerate}
 \label{lem:struc}
\end{lemma}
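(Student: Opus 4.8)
The plan is to treat the three parts separately, after recording two facts. First, since $d_{\rho,\theta}$ generates the cylinder topology, each vertex $i\in\N$ must be isolated in $(\N_\rho,\rho)$ (otherwise the $1$-cylinder $[i]$ could not be open in $d$), so $\inf_{j\ne i}\rho(i,j)>0$; consequently, if $x^{(k)}\in\Sigma$ converges to $x\in\bar\Sigma$ (equivalently, by \eqref{eq:metric}, coordinate-wise in $\rho$) and $x_\ell\in\N$, then $x^{(k)}_\ell=x_\ell$ for all large $k$, whereas if $x_\ell\in\bd\N_\rho$ then the vertices $x^{(k)}_\ell$ take infinitely many distinct values. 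Second, finite entropy forces $(A^\ell)_{ab}<\infty$ for every length $\ell$ and all vertices $a,b$: infinitely many closed paths of length $\ell$ at $a$ would give infinitely many periodic orbits of some fixed period, hence $h_{top}(\sigma)=\infty$ by Remark~\ref{rem:fe}, and the general pair reduces to $a=b$ by composing with a return path $b\to a$ supplied by topological mixing (mixing also gives every vertex in- and out-degree at least one).

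For part (1) I would argue by contradiction. Suppose $x=(x_0,x_1,\dots)\in\bar\Sigma$ has $x_n=i\in\N$, $x_{n+1},\dots,x_{n+m}\in\bd\N_\rho$ and $x_{n+m+1}=j\in\N$; write $x=\lim_k x^{(k)}$ with $x^{(k)}\in\Sigma$. By the first fact, $x^{(k)}_n=i$ and $x^{(k)}_{n+m+1}=j$ for large $k$, while $v_k:=x^{(k)}_{n+1}$ realizes infinitely many distinct values, so after passing to a subsequence the $v_k$ are pairwise distinct. Then $i=x^{(k)}_n\to v_k\to\cdots\to x^{(k)}_{n+m+1}=j$ is an admissible path of length $m+1$ from $i$ to $j$ with pairwise distinct second vertex, giving infinitely many such paths and contradicting $(A^{m+1})_{ij}<\infty$.

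Part (3) is the quickest. The set $\Sigma=\{x\in\bar\Sigma:x_\ell\in\N \text{ for all }\ell\}$ is Borel (a countable intersection of preimages of the open set $\{y:y_0\in\N\}$) and forward invariant under $\bar\sigma$, so $\bd\Sigma$ satisfies $\bar\sigma^{-1}(\bd\Sigma)\subseteq\bd\Sigma$. For an invariant measure $\mu$ this yields $\mu(\bd\Sigma\sm\bar\sigma^{-1}(\bd\Sigma))=\mu(\bd\Sigma)-\mu(\bar\sigma^{-1}(\bd\Sigma))=0$, so $\bd\Sigma$ is invariant modulo $\mu$; ergodicity then gives $\mu(\bd\Sigma)\in\{0,1\}$, and $\mu(\Sigma)<1$ rules out $0$, so $\mu(\bd\Sigma)=1$.

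The main work, and the step I expect to be the principal obstacle, is part (2): producing $x\in\bar\Sigma$ with $x_0,x_1\in\bd\N_\rho$, i.e. a transition $\bd\N_\rho\to\bd\N_\rho$. Here I would use compactness of $\bar\Sigma$ (total boundedness of $\rho$): $\N_\rho$ is then compact, so the infinite isolated set $\N$ accumulates, whence $\bd\N_\rho\ne\emptyset$. By part (1) a boundary-to-boundary transition is equivalent to the existence of \emph{any} point of $\bar\Sigma$ with two consecutive boundary coordinates, so if (2) failed then every boundary point would have exactly one boundary coordinate, necessarily at position $0$; fix such a point, giving $u\in\bd\N_\rho$, a vertex $j\in\N$, and infinitely many distinct predecessors $v_k\to j$ with $v_k\to u$ in $\rho$. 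Now choose a predecessor $t_k$ of each $v_k$: if a subsequence $t_{k_i}$ accumulates at some $w\in\bd\N_\rho$, then a subsequential limit of admissible points beginning $(t_{k_i},v_{k_i})$ is a point of $\bar\Sigma$ with first two coordinates $w,u\in\bd\N_\rho$, contradicting the assumed failure of (2); otherwise $\{t_k\}$ accumulates only in $\N$, hence is finite-valued by isolation, and some $t^*$ has $t^*\to v_k\to j$ for infinitely many distinct $v_k$, contradicting $(A^2)_{t^*j}<\infty$. Either alternative completes the proof. The delicate points are the reduction via part (1) and the extraction of the subsequential limit inside $\bar\Sigma$, which is precisely where compactness (hence total boundedness) is used; a variant avoiding part (1) runs the same dichotomy once on successors and once on predecessors of the $v_k$.
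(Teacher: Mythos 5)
Your proposal is correct, but only part (1) follows the paper's route, so a comparison is in order. For (1) the paper does exactly what you do: it extracts an approximating Cauchy sequence in $\Sigma$ with the two $\N$-coordinates frozen at $i$ and $j$, then closes the resulting infinitely many admissible length-$(m+1)$ paths into periodic orbits of fixed period $m+1+\ell$ via a mixing path $j\to i$, contradicting Remark~\ref{rem:fe}; your detour through the lemma ``finite entropy $\Rightarrow (A^\ell)_{ab}<\infty$'' is the same mechanism, just packaged reusably. For (2) you genuinely diverge: the paper argues contrapositively at the level of the graph --- if $\bd\Sigma\to\bd\Sigma$ were forbidden there would exist $N$ such that $n\to n'$ is forbidden whenever $n,n'>N$, and then a pigeonhole on in/out-neighbours in $\{1,\ldots,N\}$ again yields infinitely many periodic points of one period --- whereas you use part (1) to reduce the failure of (2) to boundary points of the form $(u,j,x_2,\ldots)$ and run a predecessor dichotomy, with compactness of $\bar\Sigma$ handling one horn and $(A^2)_{t^*j}<\infty$ the other. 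You invoke total boundedness of $\rho$ explicitly (for $\bd\N_\rho\neq\emptyset$ and the subsequential limits), while the paper announces that ``nothing is assumed on the metric $\rho$''; but the paper's first step (the existence of such an $N$) silently needs exactly such a hypothesis --- for a type 1 discrete $\rho$ the boundary is empty and statement (2) degenerates --- so your explicit assumption is the honest setting, at the formal cost of not covering the lemma's advertised generality. For (3) the paper deduces from (1) and (2) that $\bd\Sigma$ consists of points with an all-boundary tail and concludes from that description; this description in fact misses boundary points with a boundary head followed by an $\N$-tail, such as $(\infty,1,2,\ldots)$ in the paper's own backwards renewal shift (Example~\ref{eg:brenew}, where $\infty\to 1$ is allowed), whereas your argument --- forward invariance $\bar\sigma(\Sigma)\subseteq\Sigma$, hence $\bar\sigma^{-1}(\bd\Sigma)\subseteq\bd\Sigma$, hence $\bd\Sigma$ invariant mod $\mu$ and ergodicity finishes --- needs no structural description at all and is both shorter and more robust than the paper's. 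Two expository soft spots on your side, neither fatal: your derivation of vertex isolation from openness of $[i]$ quietly ignores that inadmissible continuations could keep cylinders $d$-open even with non-isolated vertices (the paper makes the same implicit assumption when it freezes the coordinates $y^k_n=i$, $y^k_{n+m+1}=j$), and in (2) you should state explicitly why $\bd\N_\rho\neq\emptyset$ gives $\bd\Sigma\neq\emptyset$ --- take $z^{(k)}\in[w_k]\cap\Sigma$ for vertices $w_k$ accumulating at $u\in\bd\N_\rho$ and pass to a subsequential limit in the compact space $\bar\Sigma$ --- since your dichotomy needs an actual boundary point of $\bar\Sigma$ to start from.
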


\begin{proof}
Suppose that in fact there is such a point $(x_0, x_1, \ldots)\in \bar\Sigma$ with $x_n=i\in \N$, $x_{n+1}, \ldots, x_{n+m}\in \partial \N$ and $x_{n+m+1}=j\in \N$.  Then there must exist a Cauchy sequence $(y^k)_k$ in $ \Sigma$, with $y_n^k=i$ and $y_{n+m+1}^k= j$, which converges to $x$.  Since, by topological mixing there is a path of length $\ell\in \N$ from $j$ to $i$, this and the Markov property imply that there must be infinitely many periodic points of period $m+1+\ell$ in $[i]$, which contradicts the finite entropy assumption (see Remark \ref{rem:fe}).

For the second part, suppose that $\bd\Sigma \to \bd\Sigma$ is not allowed.  Then there must exist $N\in \N$ such that whenever $n, n'>N$ then $n\to n'$ is forbidden.  By the pigeonhole principle there must exist $i, j\le N$ such that $i$ has infinitely many outgoing arrows and $j$ has infinitely many incoming arrows.  Then as in the first part we can find infinitely many periodic points of a finite period, contradicting the finite entropy assumption.

For the third part, note that it follows from the previous parts that 
$$\bd\Sigma=\left\{x=(x_0, x_1, \ldots)\in \bar\Sigma: \exists k\in \N_0 \text{ s.t. } x_{k+n}\in \bd \N_\rho \text{ for all } k\ge 0\right\}.$$
Therefore, if an ergodic $\bar\sigma$-invariant probability measure $\mu$ is such that $\mu(\partial \Sigma)>0$ then $\mu(\partial \Sigma)=1$.
\end{proof}

We provide examples exhibiting the allowed structure established in  Lemma~\ref{lem:struc}. It suffices to consider any vanishing type metric and to recall that for these metrics we can write $\bd \N_\rho=\{\infty\}$. We construct examples where: $n \to \infty$ for some $n\in \N$; no $n\in \N$ with $n \to \infty$; $\infty\to n$ for some $n\in \N$; and no $n\in \N$ with $\infty\to n$.  Note that in these three examples $h_{top}(\sigma)=\log 2$ and moreover we can join them together to obtain mixed behaviour.

\begin{example}[Renewal Shift: $1\to \infty$ and $\infty\nrightarrow n$]
For the renewal shift, the transition matrix $A=(a_{i,j})$ has $a_{1,j}=1$ for all $j\in \N$, $a_{i, i-1}=1$ for all $i\ge 2$ and  $a_{i, j}=0$ otherwise. Here we can define a Cauchy sequence $((1, n, n-1, \ldots))_n$, i.e., we specify the first two terms of the $n$th term in sequence as 1 and $n$, the next $n-1$ terms are then determined and the remainder can be chosen abitrarily.  So in the completion we must have $1\to \infty$.  Observe that there is no finite symbol $n$ which can be reached from a large $n'$ in one step, so $\infty\nrightarrow n$.
\label{eg:renew}
\end{example}

\begin{example}[Backwards Renewal Shift: $n\nrightarrow \infty$ and $\infty\rightarrow n$]
In this case the transition matrix  $A=(a_{i,j})$ has $a_{i,1}=1$ for all $i\in \N$, $a_{i, i+1}=1$ for all $i\in \N$ and  $a_{i, j}=0$ otherwise.  In this case we can define a Cauchy sequence $((n, 1, \ldots))_n$, so $\infty\rightarrow 1$ is allowed.  On the other hand there is no finite symbol $n$ which can reach larger and larger symbols in one step, so $n\nrightarrow \infty$.
\label{eg:brenew}
\end{example}

\begin{example}[One-sided random walk: $n\nrightarrow \infty$ and $\infty\nrightarrow n$]
Here the transition matrix  $A=(a_{i,j})$ has $a_{1,j}=1$ for $j=1, 2$ and $a_{i,i\pm 1}=1$ for all $i\ge 2$.  Clearly the only way we can go from a symbol to a large symbol is if the first symbol is also large, so ultimately we cannot have $n\to \infty$ for $n\in \N$.  Similarly if an initial symbol is large then the next symbol must also be large, so $\infty\to n$ for some $n\in \N$ is not allowed.
\label{eg:1side}
\end{example}

\section{Sectors}  \label{sec:sectors}

In this section, given a metric $\rho$ on the space of vertices we define the notion of \emph{sectorially arranged}. We prove that it implies that the (variational) pressure $P_{\Sigma}$ coincides with the pressure on the compactification induced by the metric. This, as we already pointed out, yields a good description of the regularity properties of the pressure.

\begin{definition} \label{def:sector}
We say that the vertex set $V=\N$ is \emph{sectorially arranged}, with respect to the metric $\rho$ in $V$, if there are sequences $N_k\to \infty$, $\hat\delta_k\to 0$ and 
$$V= \{1, \ldots, N_k\}\cup\bigsqcup_{i=1}^{p_k} V_k^i$$ 
for $p_k\in \N\cup\{\infty\}$, where each \emph{sector} $V_k^i$ is infinite and connected with $\diam V_k^i<\hat\delta_k$; for a given $k$, $V_k^i$ are not connected to each other; and  for $k\ge 2$, $V_{k}^i\subset V_{k-1}^{i'}$.
\end{definition}

\begin{remark} From Definition \ref{def:sector} we have:
\begin{enumerate}
\item  Note that each $V_k^i$ may have non-empty intersection with $\{1, \ldots, N_k\}$.
\item  If  we have nested sequence of sectors $V_1^{i_1}\subset V_2^{i_2}\subset \cdots$ then $\bigcap_{k} \overline{V_k^{i_k}}$ is a single point in $\bd V$.  
\item Conversely, for $v_\infty\in \bd V$ and $k\in \N$ there exists at least one $i= i(k, v_\infty)$ such that $v_\infty\in \bd V_k^i$.
\end{enumerate}
\end{remark}

\begin{lemma}
If $V$ is sectorially arranged and $v_\infty, v_\infty'\in \bd V$ then $v_\infty \to v_\infty'$ if and only if $v_\infty= v_\infty'$.
\label{lem:no_ent}
\end{lemma}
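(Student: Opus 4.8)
The plan is to prove both implications directly from the combinatorics of sectors, recalling that $v_\infty\to v_\infty'$ means there is a point $x=(x_0,x_1,\ldots)\in\bar\Sigma$ with $x_0=v_\infty$ and $x_1=v_\infty'$. Such an $x$ is the $\bar d$-limit of a Cauchy sequence $(y^k)_k$ in $\Sigma$, and since $\bar d$ comes from \eqref{eq:metric} (so that $\bar d$-convergence forces coordinatewise $\rho$-convergence), the vertices $a_k:=y^k_0$ and $b_k:=y^k_1$ satisfy $a_k\to v_\infty$ and $b_k\to v_\infty'$ in $\rho$, with each $a_k\to b_k$ an allowed edge. This is the only structure I would extract from the hypothesis $v_\infty\to v_\infty'$.

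For the forward implication, fix a level $m$. Since $v_\infty,v_\infty'\in\bd V$ while $\{1,\ldots,N_m\}$ is finite, for all large $k$ neither $a_k$ nor $b_k$ lies in $\{1,\ldots,N_m\}$ (otherwise a constant subsequence would force $v_\infty$ or $v_\infty'$ to be a vertex in $\N$); hence each lies in exactly one sector, say $a_k\in V_m^{c(k)}$ and $b_k\in V_m^{d(k)}$. The allowed edge $a_k\to b_k$ exhibits $V_m^{c(k)}\to V_m^{d(k)}$, so the non-connectedness of distinct level-$m$ sectors forces $c(k)=d(k)$, whence $\rho(a_k,b_k)\le \diam V_m^{c(k)}<\hat\delta_m$. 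Letting $k\to\infty$ and using $a_k\to v_\infty$, $b_k\to v_\infty'$ gives $\rho(v_\infty,v_\infty')\le\hat\delta_m$; since this holds for every large $m$ and $\hat\delta_m\to0$, I conclude $v_\infty=v_\infty'$.

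For the reverse implication I would construct a witness for $v_\infty\to v_\infty$. Using the remarks after Definition~\ref{def:sector}, choose nested sectors $V_1^{i_1}\supseteq V_2^{i_2}\supseteq\cdots$ with $\bigcap_k\overline{V_k^{i_k}}=\{v_\infty\}$, each infinite, connected, and of diameter $<\hat\delta_k$. As each $V_k^{i_k}$ is an infinite connected subgraph it contains an edge $a_k\to b_k$ with both endpoints in $V_k^{i_k}$, so $\rho(a_k,v_\infty),\rho(b_k,v_\infty)\le\diam V_k^{i_k}<\hat\delta_k$. By topological mixing every vertex admits an infinite forward continuation, so I can complete $a_k,b_k$ to a point $y^k=(a_k,b_k,\ldots)\in\Sigma$. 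Since $\bar\Sigma$ is compact, a subsequence of $(y^k)_k$ converges to some $x\in\bar\Sigma$; as $\bar d$-convergence is coordinatewise, $x_0=\lim a_k=v_\infty$ and $x_1=\lim b_k=v_\infty$, so $v_\infty\to v_\infty$.

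The two genuinely delicate points are: in the forward direction, certifying that the converging vertices $a_k,b_k$ actually sit inside level-$m$ sectors, so that the non-connectedness hypothesis can be triggered by the single edge between them, which is what reduces everything to the shrinking diameters $\hat\delta_m$; and in the reverse direction, extracting an edge that stays within a sector, where I am using that \emph{connected} means the induced subgraph is connected, so a directed path between two vertices of the sector lies in the sector and supplies an edge of diameter $<\hat\delta_k$. The compactness of $\bar\Sigma$, guaranteed by total boundedness of $\rho$, then furnishes the limiting self-transition without requiring any control on the tails $y^k_2,y^k_3,\ldots$.
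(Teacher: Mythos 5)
Your forward implication is essentially the paper's argument, written more carefully: the paper says that for large $n$ the two vertices $x_0^n, x_1^n$ must lie in the same sector and ``hence accumulate on the same point,'' and then flags in a remark the subtlety that the sequence may jump between different nested sectors; your uniform estimate $\rho(a_k,b_k)\le \diam V_m^{c(k)}<\hat\delta_m$ at each \emph{fixed} level $m$, followed by $m\to\infty$, is exactly the clean way to dispose of that subtlety, since $\hat\delta_m$ bounds all level-$m$ sectors simultaneously. You also read the hypotheses the same way the paper's proof does: ``not connected to each other'' is taken to forbid even a single edge between distinct same-level sectors, and ``connected'' is taken in the induced-subgraph sense (paths staying inside the sector) -- both readings are needed for the paper's own proof as well, so no deviation there.

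The reverse implication is where you genuinely depart from the paper, and where there is a small but real overreach: you invoke compactness of $\bar\Sigma$, i.e.\ total boundedness of $\rho$, but the lemma (unlike Theorem~\ref{thm:CMS_main}) does not assume $\rho$ totally bounded, and sectorial arrangement does not imply it, since $p_k=\infty$ is allowed and infinitely many small sectors may sit at mutual $\rho$-distance $1$. Your points $y^k=(a_k,b_k,\ldots)$ have completely uncontrolled tails, so without compactness they need have no convergent subsequence in the completion. The paper's proof avoids this: since $V_k^{i(k,v_\infty)}$ is infinite and connected, for every $n$ there is a point of $\Sigma$ whose first $n$ symbols \emph{all} lie in that sector (concatenate in-sector paths through $n$ distinct vertices); taking $n=n_k$ with $\theta^{n_k}/(1-\theta)<\hat\delta_k$ makes these points a Cauchy sequence converging to $(v_\infty,v_\infty,\ldots)$, so the self-transition follows from completeness of $\bar\Sigma$ alone, and one gets the stronger byproduct, recorded in the paper and reused in the proof of Theorem~\ref{thm:CMS_main}, that the fixed point $(v_\infty,v_\infty,\ldots)$ lies in $\bd\Sigma$. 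Your version is valid in every context where the lemma is actually applied (there $\rho$ is totally bounded), and is repaired in general by replacing your single in-sector edge with a long in-sector path as above, which is worth doing since it removes the compactness crutch and controls the tails for free.
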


\begin{proof}
Suppose that $v_\infty\in \bd V$.  Then for any $k, n\in \N$ there is a point $\underline x\in \Sigma$ with $n$ of its first symbols in $V_k^{i(k, v_\infty)}$.  But this also means that $\sigma\underline x\in \Sigma$ has $n-1$ of its first symbols in $V_k^{i(k, v_\infty)}$.  So both $\underline x$ and $\sigma\underline x$ are close to $v_\infty$, which implies that $v_\infty \to v_\infty$ is an allowed transition.  In particular $(v_\infty, v_\infty, \ldots)\in \bd \Sigma$.  

Now suppose that  $v_\infty \to v_\infty'$ is allowed for  $v_\infty, v_\infty'\in \bd V$.  This means that there must be a sequence $(\underline x^n)_n$ such that $x_0^n\to v_\infty$ and  $x_1^n\to v_\infty'$ in $\rho$.  Since $V$ is sectorially arranged, for all large $n$ the vertices $x_0^n$ and $x_1^n$ must lie in the same sector.  Hence they must accumulate on the same point, which implies $v_\infty= v_\infty'$.
\end{proof}

Note that in this proof, the fact that $v_\infty$ can be the limit of more than one nested sequence of sectors means that it is possible for the sequence $(\underline x^n)_n$ to jump between different nested sectors, but this does not change the result.

\begin{proof}[Proof of Theorem~\ref{thm:CMS_main}]
Note that by \cite[Corollary 9.10.1]{Wal82} the pressure of $\phi$ with respect to $(\bar\Sigma, \bar\sigma)$ can be computed as
\begin{equation*}
P_{\bar\Sigma}(\phi)= \sup \left\{ h(\nu) + \int \phi \ d \nu: \nu \in  \M_{\bar\Sigma} \text { and ergodic}		\right\}.
\end{equation*}
Therefore, we can restrict our attention to ergodic measures. The main idea here is that since by Lemma~\ref{lem:no_ent} there is no entropy on the boundary, and $\M_{\bar\Sigma}=\M_\Sigma\sqcup\M_{\bd\Sigma}$,  it is sufficient to approximate the integrals $\int\phi~d\mu$ for $\mu\in \M_{\bd \Sigma}$ by integrals  $\int\phi~d\nu$ for $\nu\in \M_{\Sigma}$.  The measures  $\nu$ will be equidistributions on periodic cycles which approximate the fixed points at the boundary.

For each $k \in \N$, topological mixing implies that there is a $M_k \in \N$ such that any two vertices in $\{1, \ldots, N_k\}$ can be connected in less than $M_k$ steps.  Therefore, given $v_\infty\in \bd V$,  for any $k\in \N$ and $n\in \N$ we can find a periodic point $\underline z$ which spends $n$ of its iterates in $V_k^{i(k, v_\infty)}$ and at most $M_k+2$ of its iterates in $\{1, \ldots, N_k\}$.  So if $\eps>0$ and $\hat\delta_k$ is chosen so that $d(\underline x, \underline y)<\delta_k$ implies $|\phi(\underline x)- \phi(\underline y)|<\eps$, this means $|\phi(\sigma^j\underline x)- \phi(\sigma^j\underline y)|<\eps$ for $n$ terms $j$.  That is, for measure $\mu_{\underline z}$ the equidistribution on the orbit of $\underline z$,

$$\left|\int \phi~d\mu_{ \underline z} -\phi(\underline x_v)\right|< \eps + \frac{M_k+2}n\|\phi\|_\infty$$
where $\underline x_v$ is the point $(v_\infty, v_\infty, \ldots)$. Thus, we can approximate any measure $\mu$ supported on $\bd \Sigma$.  
\end{proof}

\begin{remark} \label{rem:ir}
It follows from the proof that, in this setting, $V$ being sectorially arranged implies interior richness.
\end{remark}

\begin{remark}
The idea that a good understanding of the behaviour of countable Markov shifts at \emph{infinity} can shed light on the dynamical properties of the system has been recently fomalised in \cite{IomTodVel19}. 
The measure theoretic entropy at infinity of  $(\Sigma, \sigma)$ can be defined by
\begin{equation*}
h_\infty :=\sup_{(\mu_n)_n\to 0}\limsup_{n\to\infty}h_{\mu_n}(\sigma), \label{eq:mte}
\end{equation*}
where $(\mu_n)_n\to 0$ means that the sequence $(\mu_n)_n$ converges  on cylinders to the zero measure (this means for any cylinder $C$, $\mu_n(C)\to 0$ as $n\to \infty$). This quantity measures the complexity at infinity of the system and yields a great deal of information of it. Existence of equilibrium measures as well as phase transitions can be deduced from a good understanding of this quantity. Using Propositions~\ref{prop:tang} and \ref{lem_eq}, the fact that in the sectorially arranged setting, there is no entropy on the boundary also leads to the following immediate conclusion: the entropy map  of the compactified system is upper semicontinuous if and only if $h_\infty=0$. This can also be deduced from \cite[Theorem 1.1]{IomTodVel19}.
\end{remark}

The following result proves that the examples studied by Zargaryan \cite{Zar86} are all sectorially arranged. 

\begin{lemma}
Let $(\Sigma ,\sigma)$ be a finite entropy CMS and $\rho$ a metric of vanishing type on $V$, then the set of vertices is sectorially arranged.
\end{lemma}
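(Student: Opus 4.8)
The plan is to verify that a vanishing type metric satisfies all the requirements of Definition~\ref{def:sector} by constructing the required sequences $N_k$, $\hat\delta_k$ and the sector decomposition directly from the vanishing property.  Recall that $\rho$ is of vanishing type means $\lim_{n\to\infty}\sup_{i,j\ge n}\rho(x_i,x_j)=0$; equivalently, the tail of the vertex set becomes $\rho$-small, which is exactly what forces $\bd\N_\rho$ to be the single point $\{\infty\}$.  The idea is that for a vanishing type metric there is essentially only ever \emph{one} sector at each level $k$ (namely the tail $\{n : n > N_k\}$), together with the finite set $\{1,\ldots,N_k\}$, so the nested sector structure is trivially satisfied.

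First I would fix, for each $k\in\N$, a value $\hat\delta_k\to 0$.  By the vanishing type hypothesis, for each $k$ there exists $N_k\in\N$ such that $\sup_{i,j\ge N_k}\rho(x_i,x_j)<\hat\delta_k$; we may take $N_k\to\infty$ and increasing in $k$.  I then set the single sector at level $k$ to be $V_k^1=\{n\in\N : n\ge N_k\}$, so that $p_k=1$ for every $k$.  By construction $\diam V_k^1<\hat\delta_k$, and since $N_k$ is increasing we have the nesting $V_{k}^1\subset V_{k-1}^1$ required by the definition.  The decomposition $V=\{1,\ldots,N_k\}\cup V_k^1$ holds (with the allowed non-empty overlap at the single vertex $N_k$, or one can take $V_k^1=\{n>N_k\}$ to make it a genuine disjoint union).

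The remaining points of the definition to check are that each sector is \emph{infinite} and \emph{connected}, and the trivial statement that distinct sectors at the same level are not connected.  Infiniteness is immediate since $V_k^1$ is a cofinite subset of $\N$.  Connectedness is where topological mixing enters: since $(\Sigma,\sigma)$ is topologically mixing, any two vertices of $\Sigma$ are joined by a directed path, so in particular any two vertices in $V_k^1$ are connected, giving connectedness of the sector.  The condition that distinct sectors are not connected is vacuous because $p_k=1$.

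The main obstacle, such as it is, is not a deep one but lies in reconciling the connectedness requirement with the fact that a directed path between two tail vertices may pass through the finite part $\{1,\ldots,N_k\}$.  The definition only asks for a directed path (possibly in one direction, and possibly leaving the sector), so this causes no problem; but I would make this explicit, noting that the \emph{existence} of a path between any pair of vertices is guaranteed by topological mixing regardless of which vertices it traverses.  With all conditions of Definition~\ref{def:sector} verified, the conclusion that $V$ is sectorially arranged follows, and in particular this shows (together with Theorem~\ref{thm:CMS_main} and Lemma~\ref{lem:no_ent}) that the vanishing type case is genuinely subsumed, so that the sectorially arranged framework generalises Zargaryan's setting as claimed.
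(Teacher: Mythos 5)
You have constructed the right sequences $N_k$, $\hat\delta_k$ and correctly located where the vanishing type hypothesis enters (the diameter bound on the tail), but your proof founders on the connectedness requirement, and the way you resolve it empties Definition~\ref{def:sector} of the content that the rest of the paper uses. You take the single sector $V_k^1=\{n> N_k\}$, so $p_k=1$, and certify connectedness by topological mixing, explicitly allowing the connecting paths to pass through $\{1,\ldots,N_k\}$. Under that weak reading of \emph{connected}, topological mixing makes every pair of vertices in every subset of $V$ connected, so the clause of Definition~\ref{def:sector} that, for a given $k$, the sectors $V_k^i$ are not connected to each other could never hold with $p_k\ge 2$ --- contradicting the paper's own multi-sector examples (the double renewal shift with $\bd\Z_\rho=\{-\infty,+\infty\}$, and the dyadic tree example with sectors $V^1,\ldots,V^4$). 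More decisively, the sector property is deployed in Lemma~\ref{lem:no_ent} (where an edge $x_0^n\to x_1^n$ between two deep vertices forces them into the \emph{same} sector) and in the proof of Theorem~\ref{thm:CMS_main}, which for each boundary point needs periodic orbits spending $n$ consecutive iterates inside one sector of diameter less than $\hat\delta_k$ and at most $M_k+2$ iterates in $\{1,\ldots,N_k\}$, so that Birkhoff sums of $\phi$ are controlled to within $\eps$. Connectedness witnessed by detours through the finite block delivers nothing of the sort: the paths realising connectedness must stay inside the tail, i.e.\ sectors are meant to be connected components of the subgraph induced on $V\setminus\{1,\ldots,N_k\}$, which is exactly how the paper's proof treats them.

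Under that intended reading your single tail sector is simply not connected in general, even for vanishing type metrics. Take the double renewal shift on the alphabet $\Z$ and re-enumerate $\Z$ by $\N$; as the paper observes, the vanishing type condition is independent of the enumeration, so one can equip this system with a vanishing type $\rho$. Then for every $N$ the tail $\{n>N\}$ splits into two pieces, the far-positive and far-negative vertices, with edges within each piece but no edge and no tail-internal path between them; your single sector fails connectedness, while the correct decomposition has $p_k=2$. This is precisely the difficulty the paper's proof addresses: it splits $\{N+1, N+2, \ldots\}$ into its connected components, absorbs the finite components into the finite block (using that $\diam\{n, n+1, \ldots\}\to 0$, so sectors shrink), and then establishes the nesting $V_k^i\subset V_{k-1}^{i'}$ by a disjointness and pigeonhole argument. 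That decomposition-and-nesting step is the whole substance of the lemma; your proposal bypasses it via an interpretation of connectedness under which the lemma becomes vacuous but the downstream results, in particular Theorem~\ref{thm:CMS_main}, would no longer follow.
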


\begin{proof}
We will use throughout the fact that $\diam\{n, n+1, \ldots\}\to 0$ in a vanishing type metric so our sectors are always shrinking.
Let $N\in \N$.  Then $\{N+1, N+2, \ldots\}$ can be split into at most countably many disjoint sectors (i.e. connected components of $V$).  Let $V_1^1, V_1^2, \ldots$ be the infinite sectors and choose $N_1$ large enough to cover all the finite sectors, so that  
$$V= \{1, \ldots, N_1\}\cup\bigsqcup_{i=1}^{p_1} V_1^i.$$ 
Given $N'>N_1$ we repeat this procedure obtaining $V= \{1, \ldots, N_2\}\cup\bigsqcup_{i=1}^{p_2} V_2^i.$  Suppose that $V_2^{i}\cap \bigsqcup_{j=1}^{p_1} V_1^j = \es$.  Then there are infinitely many vertices outside $\bigsqcup_{j=1}^{p_1} V_1^j$, a contradiction, so $V_2^{i}\cap V_1^{j_i} \neq \es$ for some $j_i$.  The disjointness of $\{V_1^j\}$ implies that in fact this $j_i$ must be unique with this property, so these sets are nested, as required.
\end{proof}

We conclude this section with the following conjecture.

\begin{conjecture} 
Let $(\Sigma ,\sigma)$ be a finite entropy CMS, $\rho$ totally bounded and $d$ the corresponding metric in $\Sigma$.
Then $P_\Sigma(\phi)=P_{\bar\Sigma}(\phi)$ for all $\phi\in C(\bar\Sigma, \R)$ .
\end{conjecture}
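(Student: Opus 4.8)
The plan is to prove the two inequalities $P_\Sigma(\phi)\le P_{\bar\Sigma}(\phi)$ and $P_{\bar\Sigma}(\phi)\ge P_\Sigma(\phi)$ separately, as in the sectorially arranged case (Theorem~\ref{thm:CMS_main}). The inequality $P_\Sigma(\phi)\le P_{\bar\Sigma}(\phi)$ should be immediate: since $\M_\Sigma\subset \M_{\bar\Sigma}$ and, by Lemma~\ref{lem:struc}, $\M_{\bar\Sigma}=\M_\Sigma\sqcup\M_{\bd\Sigma}$, any measure realising (up to $\eps$) the variational pressure $P_\Sigma(\phi)$ is automatically an admissible competitor for $P_{\bar\Sigma}(\phi)$. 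Using the variational characterisation of $P_{\bar\Sigma}$ on the compact space $\bar\Sigma$ via Proposition~\ref{prop:pre}\eqref{vp}, this direction follows directly. The content of the conjecture is therefore the reverse inequality, which amounts to showing that the boundary $\bd\Sigma$ contributes nothing to the pressure that cannot already be approximated from inside $\Sigma$. Concretely, I would reduce to ergodic measures $\mu\in\M_{\bd\Sigma}$ and attempt to produce, for each $\eps>0$, an interior measure $\nu\in\M_\Sigma$ with $h(\nu)>h(\mu)-\eps$ and $\int\phi\,d\nu>\int\phi\,d\mu-\eps$; that is, I would try to establish that every finite entropy CMS with totally bounded $\rho$ is \emph{interior rich}.

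First I would analyse the structure of $\bd\Sigma$ in this general totally bounded setting. The key difficulty is that, unlike the sectorially arranged case, Lemma~\ref{lem:no_ent} may fail: the boundary $\bd V$ can be uncountable and, crucially, the restricted system $\bd\bar\Sigma\to\bd\bar\Sigma$ can now \emph{carry entropy}, since boundary vertices need no longer lie in shrinking nested sectors. So the argument of Theorem~\ref{thm:CMS_main}, which crucially used that there is no entropy on the boundary and only had to approximate integrals of fixed points $(v_\infty,v_\infty,\ldots)$, does not transfer. Instead, given an ergodic $\mu\in\M_{\bd\Sigma}$ of positive entropy, I would try to use the Markov structure together with topological mixing of $(\Sigma,\sigma)$ to shadow generic orbits of $\mu$ by genuine interior orbits. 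The idea is that a boundary vertex $v_\infty$ is a $\rho$-limit of interior vertices; a long finite word realising $\mu$'s statistics in $\bar\Sigma$ should be approximable by interior words whose consecutive symbols are close in $\rho$, hence whose Birkhoff sums of $\phi$ are close. Because $\phi\in C(\bar\Sigma,\R)$ is uniformly continuous on the compact space $\bar\Sigma$ with respect to $\bar d$, matching symbols within $\rho$-distance $\delta$ controls $|\phi(\sigma^j\underline x)-\phi(\sigma^j\underline y)|$. Closing up such interior words into periodic orbits (using $M_k$-step connections from mixing, exactly as in the proof of Theorem~\ref{thm:CMS_main}) yields candidate measures $\nu$ whose integrals approximate $\int\phi\,d\mu$; a counting/separated-set argument should recover the entropy.

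The main obstacle, which I expect is precisely why this is stated as a conjecture rather than a theorem, is controlling the \emph{entropy} of the approximating interior measures when the boundary itself supports positive entropy. Approximating the integral $\int\phi\,d\mu$ is comparatively soft and follows the template of Theorem~\ref{thm:CMS_main}; the hard part is ensuring that the interior words shadowing $\mu$ can be chosen to remain in a fixed (or slowly growing) finite sub-alphabet over which one has a uniform spanning/separated count giving entropy at least $h(\mu)-\eps$. Without the sectorial hypothesis there is no guarantee that the interior vertices clustering near a given boundary vertex form a connected set, nor that distinct boundary vertices used by $\mu$ can be simultaneously approximated by interior vertices whose transitions are compatible with the transition matrix $A$. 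In effect, one must upgrade the pointwise $\rho$-approximation of boundary vertices to an approximation that respects both the combinatorics of the directed graph and an entropy lower bound, and it is not clear that total boundedness of $\rho$ alone suffices for this. I would therefore expect that settling the conjecture requires either a clever coding of $\mu$-generic boundary orbits by interior orbits with explicit entropy control, or an additional structural input ruling out pathological interleaving of the interior vertices that approximate different parts of $\bd V$.
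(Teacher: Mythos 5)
The statement you were asked to prove is stated in the paper as a \emph{conjecture}: the authors offer no proof, so there is no argument of theirs to compare yours against, and your text is, by its own admission, a strategy sketch rather than a proof. What you do establish is sound and matches the paper's framing exactly: the inequality $P_\Sigma(\phi)\le P_{\bar\Sigma}(\phi)$ follows from the variational principle on the compact completion (Proposition~\ref{prop:pre}\eqref{vp}) since measures in $\M_\Sigma$ push forward to $\bar\Sigma$ with the same entropy and integral, and since $\phi\in C(\bar\Sigma,\R)$ is bounded; your reduction of the remaining inequality to \emph{interior richness} (approximating each ergodic $\mu\in\M_{\bd\Sigma}$ by $\nu\in\M_\Sigma$ with $h(\nu)>h(\mu)-\eps$ and $\int\phi\,d\nu>\int\phi\,d\mu-\eps$, via Lemma~\ref{lem:struc} and ergodic decomposition) is precisely the sufficient condition the paper isolates. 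You are also right that Lemma~\ref{lem:no_ent} genuinely fails once $\rho$ is merely totally bounded: the paper's birth-and-death example with the parity metric has $h_{top}(\bd\Sigma_\rho)=\log 2$. (Minor slip: your second displayed inequality should read $P_{\bar\Sigma}(\phi)\le P_\Sigma(\phi)$; as written it duplicates the first.)

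The genuine gap is the whole of that reverse inequality, which you flag but do not close, and your shadowing scheme as sketched would break at exactly the two points you half-identify. First, $\rho$-proximity carries no graph-theoretic information: if $v_\infty\to v_\infty'$ is an allowed boundary transition, this only means \emph{some} sequence of admissible pairs $(x_0^n, x_1^n)$ converges to $(v_\infty, v_\infty')$; the interior vertices near $v_\infty'$ witnessing this pair need not admit any transition towards the interior vertices you require for the \emph{next} symbol of a $\mu$-generic boundary word, so a long boundary word need not be shadowable by any single interior word, and the closing-up step of Theorem~\ref{thm:CMS_main} (which only had to handle fixed points $(v_\infty,v_\infty,\ldots)$) gives no help. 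Second, even granting shadowing, the bound $h(\nu)>h(\mu)-\eps$ needs the assignment of interior $n$-words to boundary $n$-words to be close to injective, or a uniform separated-set count, and total boundedness of $\rho$ provides neither. Note that the paper's own positive-entropy interior rich example (Section~\ref{ssec:nsec} and after) succeeds by a structural accident --- the full shift on $\{n, n+1\}$ sits inside $\Sigma$, so the boundary dynamics is literally duplicated arbitrarily close to $\bd\Sigma$ --- not by an argument of the kind you outline. So your proposal correctly locates why the statement is open, but, as you concede, it does not prove it; the statement remains a conjecture.
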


A related question regarding the entropy of locally compact CMS and some of its metric compactifications was posed by Fiebig and Fiebig in  \cite{ff1}. They ask whether there are natural metrics for which the compactification has entropy larger than the original system. We conjecture 
that this never happens for the class of metrics we consider. 

\section{Examples} 

If we endow  $(\Sigma, \sigma)$ with the a metric $d_{\rho, \theta}$ with $\rho$ of vanishing type, then any function $\phi\in UC_{d_{\rho, \theta}}(\Sigma)$ must converge to a unique value on $\bd\Sigma$.  For some systems this is a strong restriction.  In this section, we allow $\rho$ to be non-vanishing, which can enrich  $\bd\Sigma$ and expand  $UC_{d_{\rho, \theta}}(\Sigma)$, and where we still satisfy the conclusions of Corollary~\ref{cor:CMS gat}.

\subsection{Multiple infinities}

We next give a simple sectorially arranged example where we can take a metric $d_{\theta, \rho}$ with $\rho$ of non-vanishing type to enlarge $UC_d(\Sigma)$, but still retain the vanishing type theory.

\begin{example}[Double renewal Shift: two infinities]
We expand the renewal shift, where, for notational convenience we replace the alphabet $\N$ with $\Z$.  Define the $\Z\times\Z$ transition matrix $A=(a_{i,j})$ by $a_{0,j}=1$ for all $j\in \Z$, $a_{i, i-1}=1$ for all $i\ge 1$,  $a_{i, i+1}=1$ for all $i\le -1$, and  $a_{i, j}=0$ otherwise. Define the metric $d_{\rho, \theta}$ as in \eqref{eq:metric} with $\rho:\Z\times\Z\to [0, 1]$ a metric given, for $a, b\in \Z$, by
\begin{equation*}
\rho(a, b) = \begin{cases}0 & \text{ if } a=b,\\
 1 & \text{ if } ab\le 0 \text{ and } a\neq b,\\
\left|\frac1a-\frac1b\right| & \text{ if } ab> 0 \text{ and } a\neq b.\\
 \end{cases}
 \end{equation*}
If we restrict to $\Z_+$ or $\Z_-$ then $\rho$ is actually of vanishing type and our system is just the renewal shift.  Thus we see that $\bar\Sigma$ is obtained in this case by adding `two infinities' to the alphabet: $\bd \Z_\rho=\{-\infty, +\infty\}$.  Clearly this is sectorially arranged.  So, in contrast to the vanishing type case,  $\phi\in UC_{d_{\rho,\theta}}(\Sigma)$ can take different values at $-\infty$ and $+\infty$. 
\end{example}

Clearly we can adapt this example to have $n\in\N$ `infinities' $\bd\Sigma= \{\infty_1, \ldots, \infty_n\}$ in the alphabet corresponding to the compactification.  Moreover,  we can set this up so that to have countable number of `infinities': $\{\infty_1, \infty_2, \ldots\}$. To keep total boundedness, we need these to converge to some limit symbol $\infty_\infty$. 

\subsection{A complex boundary} \label{ssec:comp}

To see that there can be complicated topology  on the boundary of a dynamical system, we will define a CMS with a boundary which is a Cantor set.  This can be obtained from the full shift on three symbols with a full shift on two symbols removed (\cite[Section 6]{IomTod13}), which one can alternatively think of as the Young tower built over the first return map to a 1-cylinder.  Note, however, that as in Lemma~\ref{lem:no_ent}, the dynamics on this boundary are trivial.

To fix notation let $\hat\sigma:\{1, 3\}^{\N_0} \to \{1, 3\}^{\N_0}$ be the usual shift map on this space, though we actually use the standard extension of this to finite words $\hat\sigma:\bigcup_{n\ge 1} \{1, 3\}^n \to \{\epsilon\}\cup\bigcup_{n\ge 1} \{1, 3\}^n$ where $\epsilon$ is the empty word.  We use standard concatenation notation here, where in particular for any word $\underline w$, $\epsilon \underline w=\underline w=\underline w\epsilon$. Now in our example, the alphabet is 
$$\Sigma:=\left\{<\underline w2>: \underline w \in  \{\epsilon\}\cup\bigcup_{n\ge 1}\{1, 3\}^n\right\}$$
with transitions $2\to <\underline w2>$ allowed for any $\underline w\in \{\epsilon\}\cup\bigcup_{n\ge 1}\{1, 3\}^n$ and otherwise $<\underline w2>\to <\underline w'2>$ allowed only if $\underline w'=\hat\sigma \underline w$. The shift $\sigma:\Sigma\to \Sigma$ is the usual left-shift.  The renewal-like structure here means that each point $\underline x\in \Sigma$ must be of the form 
\begin{equation}
\underline x=\left(<\underline w^12>, <\hat\sigma \underline w^12>, \ldots, <\sigma^{n_1}\underline w^12>, <\underline w^22>, <\hat\sigma \underline w^22>, \ldots\right)
\label{eq:3shift}
\end{equation}
where $\underline w^i \in  \bigcup_{n\ge 1}\{1, 3\}^n$ and $n_i$ is $|\underline w^i|$ the length of $\underline w^i$.

It may be convenient for the reader to view this system as a dynamical system on the dyadic tree, where the action of the dynamics is to send the root everywhere and then each other vertex is sent only to the adjacent vertex which is one step closer to the root (see Figure \ref{fig:tree}).

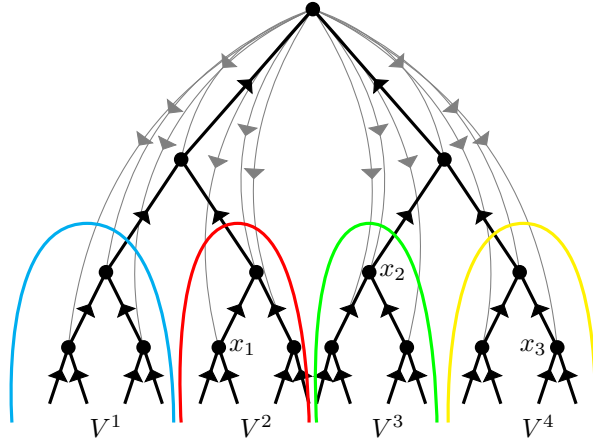
\begin{figure}[h]
\begin{tikzpicture}[thick, scale=0.5]
    \begin{scope}[very thick,nodes={sloped,allow upside down}]

\draw [thin, gray] (10,20) to[bend right] node {\midarrowdown} (3.5,11);

\draw [thin, gray] (10,20) to[bend right=40] node {\midarrowdown} (5.5,11);

\draw [thin, gray] (10,20) to[bend right] node {\midarrowdown} (7.5,11);

\draw [thin, gray] (10,20) to[bend right] node {\midarrowdown} (9.5,11);

\draw [thin, gray] (10,20) to[bend right] node {\midarrowdown} (4.5,13);

\draw [thin, gray] (10,20) to[bend right] node {\midarrowdown} (8.5,13);

\draw [thin, gray] (10,20) to[bend right] node {\midarrowdown} (6.5,16);

\draw [thin, gray] (10,20) to[bend left] node {\midarrowdown} (10.5,11);

\draw [thin, gray] (10,20) to[bend left] node {\midarrowdown} (12.5,11);

\draw [thin, gray] (10,20) to[bend left=40] node {\midarrowdown} (14.5,11);

\draw [thin, gray] (10,20) to[bend left] node {\midarrowdown} (16.5,11);

\draw [thin, gray] (10,20) to[bend left] node {\midarrowdown} (11.5,13);

\draw [thin, gray] (10,20) to[bend left] node {\midarrowdown} (15.5,13);

\draw [thin, gray] (10,20) to[bend left] node {\midarrowdown} (13.5,16);

\draw (10,20) -- node {\midarrow} (6.5,16);
\fill (10,20) circle (5.5pt);
\draw (10,20) -- node {\midarrow} (13.5,16);

\draw (6.5,16) -- node {\midarrow} (4.5,13);
\fill (6.5,16) circle (5.5pt);
\draw (6.5,16) -- node {\midarrow} (8.5,13);

\draw (13.5,16) -- node {\midarrow} (11.5,13);
\fill (13.5,16) circle (5.5pt);
\draw (13.5,16) -- node {\midarrow} (15.5,13);

\draw (4.5,13) -- node {\midarrow} (3.5,11);
\fill (4.5,13) circle (5.5pt);
\draw (4.5,13) -- node {\midarrow} (5.5,11);

\draw (8.5,13) -- node {\midarrow} (7.5,11);
\fill (8.5,13) circle (5.5pt);
\draw (8.5,13) -- node {\midarrow} (9.5,11);

\draw (11.5,13) -- node {\midarrow} (10.5,11);
\fill  (11.5,13) circle (5.5pt);
\node[right] at (11.5, 13) {$x_2$};
\draw (11.5,13) -- node {\midarrow} (12.5,11);

\draw (15.5,13) -- node {\midarrow} (14.5,11);
\fill (15.5,13) circle (5.5pt);
\draw (15.5,13) -- node {\midarrow} (16.5,11);

\draw (5.5,11) -- node {\midarrow} (5.0,9.5);
\fill (5.5,11) circle (5.5pt);
\draw (5.5,11) -- node {\midarrow} (6.0,9.5);

\draw (3.5,11) -- node {\midarrow} (3.0,9.5);
\fill (3.5,11) circle (5.5pt);
\draw (3.5,11) -- node {\midarrow} (4.0,9.5);

\draw (7.5,11) -- node {\midarrow} (7,9.5);
\fill (7.5,11) circle (5.5pt);
\node[right] at (7.5, 11) {$x_1$};
\draw (7.5,11) -- node {\midarrow} (8,9.5);

\draw (9.5,11) -- node {\midarrow} (9,9.5);
\fill (9.5,11) circle (5.5pt);
\draw (9.5,11) -- node {\midarrow} (9.9,9.5);

\draw (10.5,11) -- node {\midarrow} (10.1,9.5);
\fill(10.5,11) circle (5.5pt);
\draw (10.5,11) -- node {\midarrow} (11.0,9.5);

\draw (12.5,11) -- node {\midarrow} (12,9.5);
\fill (12.5,11) circle (5.5pt);
\draw (12.5,11) -- node {\midarrow} (13,9.5);

\draw (14.5,11) -- node {\midarrow} (14,9.5);
\fill (14.5,11) circle (5.5pt);
\draw (14.5,11) -- node {\midarrow} (15,9.5);

\draw (16.5,11) -- node {\midarrow} (16.0,9.5);
\fill (16.5,11) circle (5.5pt);
\node[left] at (16.5, 11) {$x_3$};
\draw (16.5,11) -- node {\midarrow} (17.0,9.5);

\draw [cyan] plot [smooth, tension=2] coordinates { (2,9) (4,14.3) (6.3, 9)};

\draw [red] plot [smooth, tension=2] coordinates { (6.5,9) (8,14.3) (9.9, 9)};

\draw [green] plot [smooth, tension=2] coordinates { (10.1,9) (11.5,14.3) (13.3, 9)};

\draw [yellow] plot [smooth, tension=2] coordinates { (13.6,9) (15.6,14.3) (17.5, 9)};

\node[] at (4.5, 8.9) {$V^1$};

\node[] at (8.5, 8.9) {$V^2$};

\node[] at (12, 8.9) {$V^3$};

\node[] at (16, 8.9) {$V^4$};

\end{scope}

\end{tikzpicture}

\caption{An example of sectors with a complex boundary with a renewal-like structure: the vertices are dots, the arrows given the Markov structure.  The diagram continues downwards with infinitely many vertices and arrows in the same pattern.  The arrows going from the base vertex are in grey so as not to obscure the structure too much.  We pick out particular nodes $x_1, x_2, x_3$ to show how the metric works in the text.}  \label{fig:tree}
\end{figure}

We define the metric $\rho$ by 
\begin{equation}
\rho(<\underline w2>, <\underline w'2>) = \begin{cases} 0 & \text{ if } \underline w=\underline w',\\
\frac1{1+\min\left\{i:w_{|w|-i}\neq w_{|w'|-i}'\right\}} & \text{ otherwise}.\\
 \end{cases} \label{eq:tree_rho}
 \end{equation}
 (Note that here if $|\underline w|\neq |\underline w'|$ then we make up the difference by appending the appropriate number of $\epsilon$s to the front of the shorter word.) 
Clearly $\rho$ is totally bounded.  Note that the boundary can be viewed as the space of paths in the dyadic tree, or as $\{1, 3\}^{\N_0}$.  The tree structure and the corresponding metric make it easy to see that this is sectorially arranged, see Figure~\ref{fig:tree}.  In that figure, the nodes $x_1=0102$, $x_2=012$, $x_3=1112$.  Thus
$\rho(x_1, x_2)=\frac{1}{1+0}=1, \ \rho(x_1, x_3)=\frac{1}{1+0}=1, \ \rho(x_2, x_3)=\frac{1}{1+1}=\frac12$.  For the examples of sectors there, $V^1$ has all vertices of the form $\ldots 002$,  $V^2$ has all vertices of the form $\ldots102$, $V^3$ has all vertices of the form $\ldots 012$, and $V^4$ has all vertices of the form $\ldots 112$.

The metric we choose here is different to the one we would inherit on $\{1, 3\}^{\N_0}$ from the usual metric on $\{1, 2, 3\}^{\N_0}$.   This primarily due to the fact that we `count backwards', i.e. in \eqref{eq:tree_rho} we take $\min\left\{i:w_{|w|-i}\neq w_{|w'|-i}'\right\}$, but to get back to the usual metric (totally changing the structure here) we would replace this with $\min\left\{i:w_{i}\neq w_{i}'\right\}$.

\begin{remark}
These methods  can be extended to, for example, start with a shift of finite type $\Sigma$ and replace the root vertex $[2]$ with $\F=\{v_1, v_2, \ldots\}$.  Then the boundary corresponds to $\Sigma_{\F}$, a subshift with forbidden words (added to the forbidden words for $\Sigma$).  The dynamics, however, just fixes every point.
\end{remark}

\subsection{Not sectorially arranged, but no new entropy} \label{ssec:nsec}
In these examples we create a complicated boundary via a particular representation of the renewal shift, but as we will see this does not lead to more entropy.  Let $S:[0, \infty) \to [0, 1]\times [-1, 0]$ be a continuous curve which can be written as a countable union $\{S_k\}_{k \in \N}$,  where each $S_k:[k, k+1]\to [0, 1]\times [-1, 0]$ is a straight line, parametrised with constant speed. Denote by $\text{Proj}_x$ the projection on the $x$ coordinate. We assume that  $\text{Proj}_x(S_k(k))= 0, \text{Proj}_x(S_k(k+1))=1$, if $k$ is even and  $\text{Proj}_x(S_k(k))= 1, \text{Proj}_x(S_k(k+1))=0$ if $k$ is odd. In both cases assume the $y$ coordinate of $S_k$ increases, and moreover assume that $S$ accumulates on $[0, 1]\times \{0\}$. This is a zig-zag pattern accumulating on a line. Thinking of $V$ as the vertices of the renewal shift and the metric $\rho$ as coming from the placement of $V$ through $S$, in the Euclidean metric on $ [0, 1]\times [-1, 0]$, 
we can create various examples.  The first simple, non-trivial, example is to put the vertex $2k$ at $S_k(k)$ and $2k+1$ at $S_k(k+1)$.  The arrows will go either from 0 to all vertices, or  `downhill', as in the standard renewal shift model (see Figure \ref{fig:per2}).  Then $\bd V=\bd V_\rho$ consists of two points $x_{\infty, 1}, x_{\infty, 2}$, and it can be seen from the construction that aside from the usual renewal shift transitions, $x_{\infty, 1} \to  x_{\infty, 2}$, $x_{\infty, 2}\to x_{\infty, 1}$ and $0 \to x_{\infty, 1}$, $0\to x_{\infty, 2}$ are allowed.

\begin{figure}[h]
\begin{tikzpicture}[thick, scale=0.5]
    \begin{scope}[very thick,nodes={sloped,allow upside down}]

\draw (0, -10)  -- node {\midarrow} (10, -8) ;
\fill (0, -10) circle (5.5pt);

\draw  (10, -8) -- node {\midarrow} (0, -6.5) ;
\fill (10, -8) circle (5.5pt);    

\draw  (0, -6.5) -- node {\midarrow} (10, -5.5) ;
\fill (0, -6.5) circle (5.5pt);    

\draw  (10, -5.5) -- node {\midarrow} (0, -4.7) ;
\fill (10, -5.5) circle (5.5pt);

\draw  (0, -4.7) -- node {\midarrow} (10, -3.9) ;
\fill (0, -4.7) circle (5.5pt);

\draw  (10, -3.9) -- node {\midarrow} (0, -3.2) ;
\fill (10, -3.9) circle (5.5pt);        
        
\draw  (0, -3.2) -- node {\midarrow} (10, -2.7) ;
\fill (0, -3.2) circle (5.5pt);    
\fill (10, -2.7) circle (5.5pt);

\draw  (0, -0.5) to[bend left=10] node {\midarrow} (10, -0.5) ;
\draw  (0, -0.5) to[bend right=10] node {\midarrowdown} (10, -0.5) ;

\fill (0, -0.5) circle (5.5pt);    
\fill (10, -0.5) circle (5.5pt); 

\node[] at (0, 0) {$x_{\infty, 1}$};
\node[] at (10, 0) {$x_{\infty, 2}$};

\draw[dotted, thin] (2, -1.4)--(2, -2.5);

\draw[dotted, thin] (4, -1.4)--(4, -2.5);

\draw[dotted, thin] (6, -1.4)--(6, -2.5);
\draw[dotted, thin] (8, -1.4)--(8, -2.5);

\draw [thin, gray] (0, -10)   --node {\midarrowdown} (0,-6.5);

\draw [thin, gray] (0, -10) to[bend left]  node {\midarrowdown} (0,-4.7);

\draw [thin, gray] (0, -10) to[bend left]  node {\midarrowdown} (0,-3.2);

\draw [thin, gray] (0, -10) to[bend left]  node {\midarrowdown} (0,-0.5);

\draw [thin, gray] (0, -10) to[bend right=20] node {\midarrowdown} (10,-8);

\draw [thin, gray] (0, -10)  -- node {\midarrowdown} (10,-5.5);

\draw [thin, gray] (0, -10)  -- node {\midarrowdown} (10,-3.9);

\draw [thin, gray] (0, -10)  -- node {\midarrowdown} (10,-2.7);

\draw [thin, gray] (0, -10)  -- node {\midarrowdown} (10, -0.5);

\node[] at (-0.4, -10) {$0$};
\node[] at (-0.4, -6.5) {$2$};
\node[] at (-0.4, -4.7) {$4$};
\node[] at (-0.4, -3.2) {$6$};

\node[] at (10.4, -8) {$1$};
\node[] at (10.4, -5.5) {$3$};
\node[] at (10.4, -3.9) {$5$};
\node[] at (10.4, -2.7) {$7$};

\end{scope}

\end{tikzpicture}

\caption{Zero entropy example with a renewal structure which is not sectorially arranged.  The boundary is given here also.}  \label{fig:per2}\end{figure}
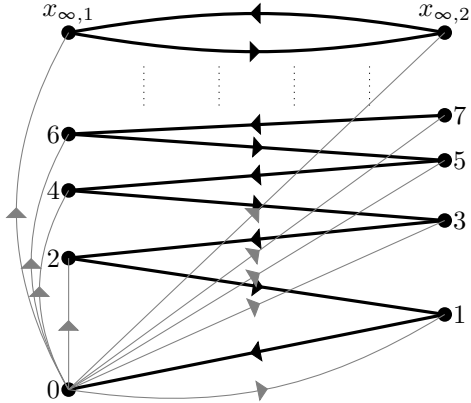

We can also create a system with three boundary points $x_{\infty, 1},  x_{\infty, 2}, x_{\infty, 3}$ which can each transition to all the others.  If the dynamics on the boundary were Markov then this would have positive entropy, but in fact the dynamics turn out to be highly deterministic.
We do this by choosing a pattern of finite zig-zags, here on 4 curves, and repeating it periodically so that it accumulates periodically on $[0, 1]\times \{0\}$.  One can think of $x_{\infty, 1}=0$, $x_{\infty, 2}=1/2$ and $x_{\infty, 3}=1$.  On the first two curves we connect the point projecting to 0 with the point projecting to 1 and vice versa.  In the next two, we connect the point projecting to 0 with the point projecting to $1/2$ with the point projecting to 1 and vice versa.  (Note that since this is renewal shift, we `connect backwards'.) With this pattern established, in the limit we obtain the boundary we claimed.  However the dynamics on the boundary is periodic, one representative given by $(\overline{x_{\infty, 1}, x_{\infty, 2}, x_{\infty, 3}, x_{\infty, 2}, x_{\infty, 1}, x_{\infty, 3}})$.

This type of example can be made to include arbitrarily many points on the boundary, but still with periodic behaviour.

\subsection{The boundary being the circle} In Section \ref{ssec:nsec} we constructed a metric $\rho$ in $V$ from the Euclidean metric in the plane. This is a rather flexible technique that allows for the construction of systems with different types of boundaries.  As an example, we construct a CMS having the unit circle as boundary. Let $p(n)$ be a strictly increasing sequence of positive integers with $p(1)=1$. The graph defining the CMS is composed of $p(n)$ loops of length $n$ based at the vertex $v$, for every $n \in \N$. Consider the set of vertices 
\begin{equation*}
V=\left\{ v \right\} \cup \bigcup_{n=2}^{\infty} \bigcup_{k=1}^{p(n)} \left\{ b_{nk}(1), \dots, b_{nk}(n-1)		\right\}.
\end{equation*}
The non zero entries of the transition matrix $A=(a_{i,j})$  are exactly $a_{v,v}$ and $a_{v, b_{nk}(1)}, a_{b_{nk}(i), b_{nk}(i+1)}, a_{b_{nk}(n-1), v}$, for all $n,k \geq 1$. This class of systems is usually called \emph{loop systems} and have been studied by several authors, see for example \cite[Example 29]{rue} and \cite[Section 5]{s2}. We will consider an increasing sequence of circles and on each of them equidistribute the vertices of a single loop. Let $(r_n)_n$ be  a strictly increasing sequence of positive real numbers  converging to $1$. Consider a sequence of circles of radii $r_n$ each of which is centred at $(0,0)$. Place the vertex $\{v\}$ at $(0,0)$ and equidistribute the vertices of each loop $\left\{ b_{nk}(1), \dots, b_{nk}(n-1)		\right\}$ in a circle, starting from that of radii $r_1$ following the numeration of $V$.  Let $\rho$ be the  metric on the vertices induced by the Euclidean metric. The boundary $\partial V$ is $S^1$ and the dynamics extends to it, fixing each point.

\subsection{Positive entropy interior rich example}\label{ssec:tangnoteq}
The examples in the previous subsection, as well as for example the renewal shift, are interior rich, but have zero entropy at the boundary.
The following example is not sectorially arranged, but is interior rich with positive entropy on the boundary.

We define the one-sided birth and death chain on $\N$ with an unusual boundary.  That is,  for $a, b\in \N$, if $a>1$ then $a\to b$ if $b\in \{a-1, a, a+1\}$; and if $a=1$, $a\to b$ if $b\in \{1, 2\}$.  Now define
\begin{equation*}
\rho(a, b)=\begin{cases} 1 &\text{ if } a+b \text{ is odd},\\
\left|\frac1a-\frac1b\right| &\text{ if } a+b \text{ is even}.
\end{cases}
\end{equation*}

Therefore,  $\bd \N_\rho=\{\infty_o,\infty_e\}$ where all transitions between these elements are allowed, so $h_{top}(\bd \Sigma_\rho)=\log 2$.  However, the dynamics on the boundary, and hence the measures, are completely mirrored in $\Sigma$ arbitrarily close to the boundary since the full shift on $\{n, n+1\}$ is a subset of $\Sigma$.  Thus this example is interior rich.  It is also easy to see that this cannot be sectorially arranged.  One can show that $h_{top}(\Sigma)=\log 3$.    Finally, recalling Proposition~\ref{prop:tang}, we observe that for the potential $\phi\equiv 0$, the measure of maximal entropy on $\bd\Sigma_\rho$ is a tangent functional, but since it has entropy strictly less than $h_{top}(\Sigma)$, it is not an equilibrium measure.

\end{document}